\documentclass{amsart}
\usepackage[foot]{amsaddr}
\usepackage[T1]{fontenc}
\usepackage[utf8]{inputenc}
\usepackage{geometry}
\geometry{verbose,tmargin=2.5cm,bmargin=2.5cm,lmargin=2.5cm,rmargin=2.5cm}
\usepackage{amsmath}
\usepackage{amsthm}
\usepackage{bm}
\usepackage{comment}
\usepackage[colorlinks=true, allcolors=blue]{hyperref}
\usepackage[capitalize, nameinlink]{cleveref}

\usepackage{todonotes}
\makeatletter
\theoremstyle{plain}
\newtheorem{thm}{\protect\theoremname}
\theoremstyle{plain}
\newtheorem{lem}[thm]{\protect\lemmaname}
\theoremstyle{plain}
\newtheorem{definition}[thm]{Definition}
\newtheorem{proposition}[thm]{Proposition}
\newtheorem{remark}[thm]{Remark}

\newcommand{\mb}{\mathbb}

\newcommand{\mc}{\mathcal}

\newcommand{\on}{\operatorname}

\newcommand{\wt}{\widetilde}

\renewcommand{\epsilon}{\varepsilon}

\usepackage{amssymb}
\usepackage{enumerate}

\makeatother

\providecommand{\lemmaname}{Lemma}
\providecommand{\theoremname}{Theorem}

\begin{document}
\title{Optimal thresholds for Latin squares, Steiner Triple Systems, and edge colorings}
\author{Vishesh Jain}
\address{Department of Mathematics, Statistics, and Computer Science, University of Illinois Chicago, Chicago IL USA.}
\author{Huy Tuan Pham}
\address{Department of Mathematics, Stanford University, Stanford CA USA.}
\email[A1]{visheshj@uic.edu}
\email[A2]{huypham@stanford.edu}
\maketitle

\begin{abstract}
We show that the threshold for the binomial random $3$-partite, $3$-uniform hypergraph $G^{3}((n,n,n),p)$ to contain a Latin square is $\Theta(\log{n}/n)$. We also prove analogous results for Steiner triple systems and proper list edge-colorings of the complete (bipartite) graph with random lists. Our results answer several related questions of Johansson, Luria-Simkin, Casselgren-H\"aggkvist, Simkin, and Kang-Kelly-K\"uhn-Methuku-Osthus.       
\end{abstract}

\section{Introduction}
Given a finite set $X$ and $p \in (0,1)$, let $X_{p} \subseteq X$ be a random subset where each element of $X$ is sampled independently with probability $p$. For
a non-trivial monotone property ${\mathcal P}\subseteq2^{X}$ over subsets of $X$, the threshold
$p_{c}({\mathcal P})$ of ${\mathcal P}$ is the value $p^*$ at which $\mathbb{P}(X_{p^*}\in{\mathcal P})=1/2$.
When $X=\binom{[n]}{k}$, $X_{p}$ is precisely the Erd\H{o}s-R\'enyi
random $k$-uniform hypergraph $G^{(k)}(n,p)$, for which estimating thresholds
of interesting combinatorial properties has been a major direction in combinatorics, going back to a seminal result of Erd\H{o}s and R\'enyi that the threshold for the appearance of perfect matchings in $G^{(2)}(n,p)$ is $\log{n}/n$ \cite{ER}. The determination of the threshold for the appearance of perfect matchings in $k$-uniform hypergraphs for $k\geq 3$ is a notorious problem of Shamir, for which the threshold $\Theta((\log n)/n^{k-1})$ was established in the celebrated work of Johansson, Kahn
and Vu \cite{JKV}. 

Recently, Park and the second author \cite{PP} gave
a proof of the Kahn--Kalai conjecture (a weaker fractional version conjectured by Talagrand \cite{Tal} was obtained earlier in work of Frankston, Kahn, Narayanan, and Park \cite{FKNP} building on the sunflower breakthrough of Alweiss, Lovett, Wu and Zhang \cite{ALWZ}), which allows one (among other things) to get a much simpler
proof of the main result of \cite{JKV}. To state the result of \cite{PP}, we say that ${\mathcal H}\subseteq2^{X}$
is {$p$-small} if there exists ${\mathcal G}\subseteq2^{X}$
such that any set in ${\mathcal H}$ contains a set from ${\mathcal G}$
and $\sum_{G\in{\mathcal G}}p^{|G|}<\frac{1}{2}$. 
Moreover, a distribution $\mu$ supported on $\mc{H}$ is said to be $p$-spread if for all $S\subseteq X$, $\mu(\{W\in \mc{H}:S\subseteq W\})\le 2 p^{|S|}$. 
Using linear programming duality, Talagrand
\cite{Tal} observed that if ${\mathcal H}$ supports a $p$-spread distribution $\mu$, then $\mc{H}$ is not $p$-small. 
\begin{thm}[\cite{PP}]
\label{thm:FracKK}There exists $C>0$ such that the following holds.
Let ${\mathcal H}\subseteq2^{X}$ and $p \in (0,1)$ be such that ${\mathcal H}$ is not $p$-small. Then, $X_{Cp\log|X|}$ contains a set in ${\mathcal H}$ with high probability (i.e.~probability going to $0$ as $|X| \to \infty$).  
In particular, the same conclusion holds if there exists a distribution
$\mu$ supported on ${\mathcal H}$ which is $p$-spread. 
\end{thm}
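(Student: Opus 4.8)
The plan is to follow the route of Park and the second author \cite{PP}. First I would replace $\mathcal{H}$ by its inclusion-minimal members (this changes neither the hypothesis nor the conclusion), set $N:=|X|$ and $\ell:=\max_{H\in\mathcal{H}}|H|\le N$, and dispose of a trivial case: since ``not $p$-small'' forces $\mathcal{H}\ne\emptyset$, if $Cp\log N\ge 1$ then $X_{Cp\log N}=X$ already contains every member of $\mathcal{H}$, so I may assume $p<1/(C\log N)$. The core reduction is to view $X_{Cp\log N}$ as the union of $M:=\lceil C'\log N\rceil$ independent copies $W_1,\dots,W_M$ of $X_{C_0 p}$ --- so $\bigcup_j W_j$ is stochastically dominated by $X_{Cp\log N}$ once $C'C_0\le C$ --- to reveal the $W_j$ one at a time, and to use them to successively shrink the sizes of the sets in an evolving ``residual family'' until $\emptyset$ appears in it, at which point $\bigcup_j W_j$ contains a member of $\mathcal{H}$. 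A point worth stressing is that I would carry the argument with the ``not $p$-small'' hypothesis itself rather than with a $p$-spread measure: such a measure need not exist in general (an integrality-gap phenomenon, which is exactly where \cite{PP} improves on \cite{FKNP}), although it does in the ``in particular'' case by Talagrand's LP-duality observation, which would only streamline the bookkeeping.

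The engine is a single-round shrinking lemma, and proving it is the heart of the matter. I would first record a deterministic fact: for \emph{any} $W\subseteq X$, if $\mathcal{F}$ is not $p$-small then neither is $\{F\setminus W:F\in\mathcal{F}\}$, since any cover of the latter is, by inclusion, a cover of the former. The lemma then asserts that one can moreover pass to \emph{small} residuals: there are absolute constants $C_0$ and $\beta<1$ such that if $\mathcal{F}$ is not $p$-small with every member of size at most $r\ge 1$ and $W$ is a fresh copy of $Y_{C_0 p}$ on the ambient set $Y$, then with probability at least $1-\beta$ the subfamily $\{F\setminus W:F\in\mathcal{F},\ |F\setminus W|\le r/2\}$ is again not $p$-small. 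I would prove this by contradiction, via an encoding argument in the style of \cite{FKNP}: from a cheap cover $\mathcal{G}'$ of the would-be-$p$-small subfamily one manufactures, for a typical $W$, a cheap cover of $\mathcal{F}$ itself --- roughly, covering each $F$ whose residual failed to shrink by a subset of $F$ obtained from $W\cap F$ together with a short ``correction'' --- whose total weight is below $1/2$, contradicting that $\mathcal{F}$ is not $p$-small. Taking $C_0$ large is precisely what makes these corrections cheap in expectation over $W$, and $\beta$ is the (small, absolute) probability that $W$ is atypical.

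Finally I would iterate. Processing $W_1,\dots,W_M$ in order, I would maintain a current ambient set, a current not-$p$-small residual family, and a current bound $r$ on the sizes of its members (starting from $X$, $\mathcal{H}$, and $r=\ell$): on processing $W_j$, if the single-round conclusion holds for the current data --- which, conditionally on $W_1,\dots,W_{j-1}$, has probability at least $1-\beta$ since $W_j$ is fresh --- I would replace the family by its small-residual subfamily, halve $r$, and delete $W_j$ from the ambient set; otherwise skip $W_j$. After $t:=\lceil\log_2\ell\rceil+1$ successful steps one has $r<1$, so the current family (being not $p$-small, hence nonempty) contains $\emptyset$, and unwinding the successful deletions exhibits a member of $\mathcal{H}$ inside the union of the corresponding $W_j$'s, hence inside $\bigcup_{j\le M}W_j$. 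A Chernoff bound shows that at least $t\le\log_2 N+2$ of the $M=\Theta(\log N)$ steps succeed with probability $1-N^{-\Omega(1)}=1-o(1)$, once $C'$ (hence $C=C'C_0$) is chosen absolute and large enough in terms of $\beta$; the same estimate automatically covers the regime of bounded $\ell$, where the surplus steps merely amplify a coupon-collector-type event. By the stochastic domination noted above, $X_{Cp\log N}$ then contains a member of $\mathcal{H}$ with probability $1-o(1)$, as required. The step I expect to be the main obstacle is the single-round lemma, and within it the two quantitative features that make the whole scheme run with an absolute constant $C$: that the failure probability can be kept below a fixed $\beta<1$ (rather than having to tend to $0$, which would force $C\to\infty$) and that the residual subfamily stays not $p$-small with the \emph{same} parameter $p$ at each step, with no accumulating loss --- securing this uniform control is the crux of the Park--Pham argument.
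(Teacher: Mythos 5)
The paper does not prove \cref{thm:FracKK} at all: it is imported as a black box from \cite{PP} (with the spread-measure version going back to \cite{Tal} and \cite{FKNP}), so there is no internal proof to compare you against; the right benchmark is the Park--Pham argument itself. Measured against that, your outline is a faithful reconstruction of its skeleton: passing to inclusion-minimal members, writing $X_{Cp\log|X|}$ as a union of $\Theta(\log|X|)$ independent copies of $X_{C_0p}$, the (correct) observation that any cover of $\{F\setminus W\}$ is a cover of $\mathcal{F}$ so non-$p$-smallness survives taking residuals, the halving iteration that stops when $\emptyset$ enters the family, the unwinding that exhibits a member of $\mathcal{H}$ inside the union of the used $W_j$'s, and the Chernoff boosting that converts a constant conditional per-round success probability into a $1-o(1)$ guarantee. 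These steps are all sound, and the formulation via ``all residuals of size at most $r/2$'' rather than minimal fragments is an inessential difference, since non-$p$-smallness of the minimal-fragment subfamily implies it for your larger subfamily.

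The genuine gap is exactly where you flag it: the single-round shrinking lemma. All of the difficulty of the theorem is concentrated there, and your proposal only names the intended strategy (``an encoding argument in the style of \cite{FKNP}'', covering each $F$ whose residual failed to shrink by a piece built from $W\cap F$ plus a short correction) without constructing the cover or carrying out the weight estimate. This is precisely the point where \cite{FKNP} needs a $p$-spread measure and where \cite{PP} introduces the minimal-fragment cover in order to work with non-$p$-smallness directly; the two quantitative features you correctly identify as the crux --- a fixed failure probability $\beta<1$ per round and preservation of non-$p$-smallness at the \emph{same} parameter $p$, with no accumulating loss --- are outputs of that construction and do not follow from anything written in the proposal. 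To complete the argument you would have to specify, for each $W$ for which the small-residual subfamily admits a cover $\mathcal{G}'$ of weight below $1/2$, the induced cover of $\mathcal{F}$ (in \cite{PP}: by minimal fragments, together with the sets of $\mathcal{G}'$), and then run the counting over pairs $(F,W)$ --- trading elements of $F$ against elements of $W$, which is where the large constant $C_0$ produces the per-element gain --- to show its expected weight is below $1/2$, contradicting that $\mathcal{F}$ is not $p$-small. As it stands, the proposal is an accurate roadmap of \cite{PP} rather than a proof.
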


The study of combinatorial designs has a rich history, dating back to work of Euler in the 18th century and Kirkman in the 19th century. 
A $t$-$(n,k,\lambda)$
design is a $k$-uniform hypergraph on $[n]$ where any subset of
$t$ vertices appears in exactly $\lambda$ hyperedges. Note that a perfect matching in a $k$-uniform hypergraphs is precisely a $1$-$(n,k,1)$ design. The existence of designs for all $t,k,\lambda$ and for all $n \geq n_0(t,k,\lambda)$ satisfying certain necessary divisibility conditions was shown in a landmark work of Keevash \cite{Kee} (see also the work of Glock, K\"uhn, Lo and Osthus \cite{GKLO} for an alternate proof). A Steiner triple system is a $2$-$(n,3,1)$ design. Steiner triple systems are closely related to Latin squares, which are assignments of $[n]$ to an $n\times n$ grid so that each row and each column contains all distinct symbols. Latin
squares can be equivalently described as a tripartite $3$-uniform
hypergraph with vertex parts of size $n$, where each pair of vertices
in different parts is contained in exactly one hyperedge. 

The question of determining the threshold for which $G^{3}((n,n,n),p)$ -- the random tripartite $3$-uniform hypergraph on parts of size $n$ each --  contains a Latin square was raised by Johansson in 2006 \cite{J} and has been popularized by Simkin in the past few years as his favorite open problem. Luria and Simkin \cite{LS} conjectured that this threshold is $\Theta(\log{n}/n)$. It is easily seen (see, e.g.,~\cite{SSS}) that the threshold is $\Omega(\log{n}/n)$, so that the challenge is in proving the upper bound. We mention three related conjectures. Simkin \cite{Sim} conjectured that the
threshold for containment of a Steiner triple system in the random $3$-uniform hypergraph $G^{3}(n,p)$, with $n \equiv 1,3 \mod 6$, is $\Theta((\log n)/n)$,
and more generally, that the same holds for $t$-$(n,t+1,1)$ designs for any fixed $t$ and $n$ satisfying the corresponding divisibility conditions. Given a graph $G$, a random $(k,n)$-list assignment $L$ for edges
of $G$ is an assignment of an independent, uniformly random set $L(e) \in \binom{[n]}{k}$ of colors to each edge $e$ and a proper $L$-list coloring of $G$ is a proper edge-coloring where the color of an edge $e$ belongs to $L(e)$. Casselgren and H\"aggkvist \cite{CH-1} conjectured that for a random
$(O(\log n),n)$-list assignment $L$ for edges of $K_{n,n}$, there is an $L$-list coloring of $K_{n,n}$ with probability at least $1/2$. A non-partite version of this, namely that a random $O((\log{n}), 2n-1)$-list assignment $L$ for edges of $K_{2n}$ admits an $L$-list coloring of $K_{2n}$ with probability at least $1/2$, was conjectured by Kang, Kelly, K\"uhn, Methuku and Osthus \cite{KKKMO}. We refer the reader to \cite{SSS, KKKMO} for further discussion of the history, as well as for more precise versions of the aforementioned conjectures. 

Given \cref{thm:FracKK}, a natural approach to all of these conjectures is to show that the corresponding property is not $O(1/n)$-small, potentially by establishing the stronger statement that the relevant collection of subsets supports an $O(1/n)$-spread distribution. For Shamir's problem, simple counting shows that the uniform distribution on perfect matchings of a $k$-uniform hypergraph on $n$ vertices is $O_k(1/n)$-spread. On the other hand, for complicated structures such as Steiner triple systems and Latin squares, known enumeration results are unfortunately not precise enough to imply anything non-trivial about the spread of the uniform distribution (see the discussion in \cite{KKKMO,SSS,FKNP}). 
In a recent breakthrough work, Sah, Sawhney
and Simkin \cite{SSS} demonstrated the existence of an $O(n^{o(1)}/n)$-spread distribution on Steiner triple systems in $K^{(3)}_{n}$ ($n \equiv 1,3\mod 6$) and Latin squares in $K^{(3)}_{n,n,n}$, thereby establishing the corresponding conjectures about the threshold within a subpolynomial factor $n^{o(1)}$. Their proof constructs such a spread distribution using a clever ``spread boosting'' argument utilizing the iterative absorption framework of K\"uhn, Osthus, and collaborators.   
In a beautiful work using a different and simpler iterative absorption scheme, Kang, Kelly, K\"uhn, Methuku and Osthus \cite{KKKMO} improved the spread parameter to $O(\log{n}/n)$, which is a single logarithmic factor off from the conjectured optimal bound.

In this paper, we show how to construct $O(1/n)$-spread distributions on Steiner triple systems in $K^{(3)}_{n}$ ($n\equiv 1,3 \mod 6$), Latin squares in $K^{(3)}_{(n,n,n)}$, and ordered $1$-factorizations of $K_{2n}$ (i.e.~a decomposition of the edges of $K_{2n}$ into a tuple $(M_1,\dots, M_{2n-1})$ of disjoint perfect matchings), thereby settling the aforementioned conjectures of Johansson, Simkin and Luria, Casselgren and H\"aggkvist, Kang, Kelly, K\"uhn, Methuku and Osthus, and the conjecture of Simkin for the special case of Steiner triple systems.   
\begin{thm}
\label{thm:threshold-designs}
There exists an absolute constant $C>0$ such that for all $n\geq C$, each of the following statements hold with probability at least $1/2$. 
\begin{enumerate}
\item The random $3$-uniform hypergraph $G^{(3)}(n,C(\log n)/n)$
contains a Steiner triple system, provided $n\equiv 1,3 \mod 6$. 
\item The random tripartite $3$-uniform $G^{(3)}((n,n,n),C(\log n)/n)$ contains a Latin square. 
\item A random $(C\log n,n)$-list assignment $L$ for edges of $K_{n,n}$ admits a proper $L$-list coloring. 
\item A random $(C\log n),n)$-list assignment $L$ for edges of $K_{2n}$ admits a proper $L$-list coloring. 
\end{enumerate}
\end{thm}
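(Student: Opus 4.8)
The plan is to derive all four assertions from \cref{thm:FracKK} by producing, in each case, a probability distribution that is $O(1/n)$-spread on the relevant family. I would first record the reductions: for (1), take $X=\binom{[n]}{3}$ and let $\mc H$ be the upward-closed family of subsets containing a Steiner triple system; for (2), take $X=[n]^{3}$ (a triple recording a row--column--symbol incidence) and let $\mc H$ be the subsets containing a Latin square; for (3), after the routine coupling between a random $(k,n)$-list assignment and the model in which each (edge, color) pair survives independently with probability $\approx k/n$, the relevant object is $X=E(K_{n,n})\times[n]$ with $\mc H$ the subsets containing the incidence set of a proper edge coloring of $K_{n,n}$ --- which is exactly the Latin-square family of (2); and (4) is identical with $K_{n,n}$ replaced by $K_{2n}$, colors from $[2n-1]$, and Latin squares by ordered $1$-factorizations. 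Since $|X|=n^{\Theta(1)}$ throughout, $\log|X|=\Theta(\log n)$, so by \cref{thm:FracKK} an $O(1/n)$-spread distribution on $\mc H$ yields the stated conclusion at $p=O(\log n/n)$. It therefore suffices to construct $O(1/n)$-spread distributions on (a) Steiner triple systems of order $n$, (b) Latin squares of order $n$, and (c) ordered $1$-factorizations of $K_{2n}$; I describe the plan for (b), the others being parallel once the corresponding absorber is available.

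For (b) I would sample from a three-phase random construction. Phase~(i): choose a random reserved set $\mc R$ of grid cells, structured as a union of many small, randomly located ``absorbing gadgets'' --- small partial arrays with holes, each admitting several valid completions --- with $\mc R$ meeting each row and each column in a controlled number $k_{r}$ of cells, so that $|\mc R|=o(n^{2})$ and the gadgets can jointly repair any sufficiently sparse and well-distributed ``deficiency''. Phase~(ii): fill in \emph{all} of $[n]^{2}\setminus\mc R$ by a pseudorandom semi-random (R\"odl nibble, plus clean-up) process, producing a partial Latin square $P$ that is $O(1/n)$-spread and, with high probability, leaves at each line a deficiency of the form the gadgets can handle. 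Phase~(iii): complete $P$ on $\mc R$ using the gadget flexibility, routing the symbols owed to each line across that line's reserved cells as symmetrically as possible. Since phase~(ii) fills the entire complement of $\mc R$ and phase~(iii) fills only $\mc R$, and a valid completion exists with high probability, this outputs a genuine Latin square.

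To check the spread bound, fix $S\subseteq X$ and split $S=S_{\mathrm b}\sqcup S_{\mc R}$ according to whether a triple's cell lies outside or inside $\mc R$. Cells outside $\mc R$ are filled only in phase~(ii), so $\Pr[S_{\mathrm b}\subseteq\text{output}]\le(O(1/n))^{|S_{\mathrm b}|}$ directly from the nibble's spread. For a cell $c\in\mc R$ in row $r$, the symbol placed at $c$ must be one of the $k_{r}$ symbols owed to row $r$ by the phase-(ii) array; the nibble's pseudorandomness makes a given symbol owed to row $r$ with probability $O(k_{r}/n)$, while the symmetric completion then sends an owed symbol to any particular owed cell of that row with conditional probability $O(1/k_{r})$ --- the two combine to $O(1/n)$ per cell, and the same computation, carried out for a set, gives $\Pr[S_{\mc R}\subseteq\text{output}]\le(O(1/n))^{|S_{\mc R}|}$. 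Multiplying, $\Pr[S\subseteq\text{output}]\le(O(1/n))^{|S|}$, so the distribution is $O(1/n)$-spread and \cref{thm:FracKK} finishes (2) and (3). Cases (1) and (4) run through the same scheme, with phase~(ii) the random greedy triangle-packing nibble on $K_{n}$, respectively the perfect-matching nibble on $K_{2n}$ run only down to the $1$-factorization threshold, the remaining low-degree part handled by the absorber.

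The step I expect to be the main obstacle is reconciling, inside phase~(iii), flexibility with spread. On one hand the gadgets and completion rule must repair the genuinely messy, correlated deficiency pattern the nibble leaves, and must do so in $O(1)$ rounds --- iterating the repair over many scales, as in the work of Sah--Sawhney--Simkin and Kang--Kelly--K\"uhn--Methuku--Osthus, is exactly what costs the extra logarithmic factor and what must be avoided to land at $O(1/n)$. On the other hand, although a symbol can be owed to a line with the ``boosted'' probability $\approx k_{r}/n$ (and $k_{r}$ need not be bounded), it must still be routed to any particular reserved cell of that line with probability only $O(1/n)$ --- which forces the completion to mix symbols across reserved cells as uniformly and robustly as possible, even conditionally, so that no correlations inflate the bound. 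Meeting both demands requires a careful joint design of the gadgets and the routing rule, followed by a proof --- tracking the nibble's pseudorandomness quantitatively rather than treating it as a black box, and with constants sharp enough that no stray logarithm re-enters --- that the required estimates genuinely hold for the deficiency the nibble produces. The versions for $K_{2n}$ and for Steiner triple systems require only routine modifications once the Latin-square case is in hand.
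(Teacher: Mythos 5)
Your reduction step is sound and essentially correct: one can indeed reduce all four assertions, via \cref{thm:FracKK} and a routine coupling for the list-coloring models, to exhibiting $O(1/n)$-spread distributions on Steiner triple systems, Latin squares, and ordered $1$-factorizations. (The paper itself performs this reduction by quoting \cite[Theorems~1.6, 1.7]{KKKMO}, which further reduce all four cases to a single statement about spread distributions on $1$-factorizations of nearly-complete regular bipartite graphs --- this is what \cref{thm:factorization-spread} supplies. Your direct reduction would also work, but note that the paper's choice is not cosmetic: working with $1$-factorizations of regular bipartite graphs is what makes the recursive construction below tractable.)

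The genuine gap is exactly where you flag it: you do not construct the spread measure, and the hole you identify at the end of your phase~(iii) discussion is the whole problem. A ``reserve an absorber, run a nibble, repair at the end'' scheme has to certify that the repair step does not inflate any $\Pr[S\subseteq \text{output}]$ by more than a constant, uniformly over $S$, including sets $S$ concentrated inside the reserved cells of a single line. The heuristic $O(k_r/n)\cdot O(1/k_r)$ you give treats ``the symbol is owed'' and ``the routing sends it to $c$'' as if they multiplied cleanly for all cells simultaneously, but these events are conditionally dependent across cells in the same row and column, and the repair assignment is a global perfect matching in a small auxiliary graph whose structure is determined by the correlated nibble output. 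No argument is given, and you explicitly say you do not know how to supply one; this is a research program, not a proof. The paper sidesteps this obstacle entirely with a different mechanism: it recursively halves (by a constant factor $S$) the degree of a \emph{regular bipartite graph decomposition}, at each level drawing a random coarse partition, conditioning on near-regularity via the LLL distribution (\cref{lem:LLL}) with a sharp event-specific comparison so the conditioning is ``free'' for the events that matter, using the inductively-known spread of the current iterate to union-bound the needed expansion property (\cref{prop:inner-iteration-key}), and repairing to exact regularity with edges drawn only from a polynomially small reserved sub-partition so the repair contributes a $1+o(1)$ multiplicative loss per level (\cref{prop:admissibile-decomposition}). This ``lossless'' per-level repair is the analogue of what your phase~(iii) would need, and proving it is the technical heart of the paper; nothing in your proposal establishes the corresponding estimate for the nibble-plus-absorber route.
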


By combining \cref{thm:FracKK} and the reductions \cite[Theorems~1.6,1.7]{KKKMO} of Kang, Kelly, K\"uhn, Methuku, and Osthus, \cref{thm:threshold-designs} is a direct corollary of the following stronger theorem applied to $K_{n,n}$ (which is trivially $0$-nice, see \cref{def:nice}).

\begin{thm}
\label{thm:factorization-spread}
For any $\epsilon \in (0,1/2)$, there exist sufficiently large constants $N_0 \in \mb{N}$ and $S \in 2^{\mb{N}}-1$ for which the following holds. Let ${\mathbb G} = (A,B,E)$ be a bipartite graph with $|A|=|B|=n$ which is $D_0$-regular for $D_0 \geq \epsilon n \geq N_0$ and $0$-nice (\cref{def:nice}). For $r \in \mb{N}$, let $D_{r} := D_0/S^{r}$.  
Then, for all $r \in \mb{Z}_{\geq 0}$ such that $D_{r} \geq N_0$, there exists a probability distribution $\bm{\mc{P}}_r$ on  decompositions $\mc{P}_r$ of $E(\mb{G})$ with spread $2\epsilon^{-1}\cdot D_r/n$. 
\end{thm}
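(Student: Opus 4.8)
The plan is to prove \cref{thm:factorization-spread} by induction on $r$. I read $\mc P_r$ as an ordered decomposition of $E(\mb G)$ into $S^r$ edge-disjoint spanning subgraphs, each $D_r$-regular, and I strengthen the inductive hypothesis to also record that each of these $S^r$ subgraphs is nice (\cref{def:nice}), with a niceness parameter that I track and keep below an absolute constant across all of the at most $\log_S(D_0/N_0)$ levels. The spread is measured over the ground set $X$ whose elements are pairs (edge of $\mb G$, part index in $[S^r]$), a decomposition being identified with the set of such pairs it induces. The base case $r=0$ is the point mass on the trivial decomposition $(\mb G)$: since $p_0 := 2\epsilon^{-1}D_0/n \ge 2$ by the hypothesis $D_0 \ge \epsilon n$, the spread inequality $\bm{\mc P}_0(\{W : T \subseteq W\}) \le 1 \le 2 p_0^{|T|}$ is immediate, and $(\mb G)$ is nice by assumption.

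For the inductive step, suppose $\bm{\mc P}_r$ with spread $p_r = 2\epsilon^{-1}D_r/n$ has been constructed. The idea is to take each decomposition $(\mb H_1,\dots,\mb H_{S^r})$ in its support and, independently for each $i$, refine the $D_r$-regular nice graph $\mb H_i$ into $S$ edge-disjoint $D_{r+1}$-regular nice subgraphs using a \emph{splitting distribution}; concatenating these refinements yields $\bm{\mc P}_{r+1}$, which is supported on decompositions of $E(\mb G)$ into $S^{r+1}$ nice subgraphs of degree $D_{r+1}=D_r/S$. The single ingredient this requires is the following.

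\medskip
\noindent\textbf{Splitting Lemma (the crux).} \emph{For every nice $D$-regular bipartite graph $\mb H$ on $n+n$ vertices with $D/S\ge N_0$, there is a distribution on partitions of $E(\mb H)$ into $S$ edge-disjoint $D/S$-regular nice subgraphs such that, for every assignment $T$ of finitely many distinct edges of $\mb H$ to parts, the probability that the random partition agrees with $T$ is at most $S^{-|T|}$.}
\medskip

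\noindent Granting this, the spread bound for $\bm{\mc P}_{r+1}$ is a one-line computation. A level-$(r+1)$ ground-set element is a pair (edge, part in $[S^{r+1}]$), with a projection $\pi$ to the level-$r$ ground set forgetting the sub-part; fix $T$ at level $r+1$, which we may assume has at most one element per edge (else the probability is $0$), so that $\pi$ is injective on $T$ and $|\pi(T)|=|T|$. If the random decomposition agrees with $T$ then its level-$r$ coarsening contains $\pi(T)$, and, conditioned on any $W_r$ in the support of $\bm{\mc P}_r$ with $\pi(T)\subseteq W_r$, the refinements of the distinct pieces are independent, so by the Splitting Lemma the conditional probability of agreeing with $T$ is at most $\prod_i S^{-|T_i|}=S^{-|T|}$, where $T_i$ is the portion of $T$ lying in piece $i$. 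Hence
\[
\bm{\mc P}_{r+1}\bigl(\{W : T\subseteq W\}\bigr)\;\le\;\bm{\mc P}_r\bigl(\{W_r : \pi(T)\subseteq W_r\}\bigr)\cdot S^{-|T|}\;\le\;2\,p_r^{|T|}\,S^{-|T|}\;=\;2\,(p_r/S)^{|T|}\;=\;2\,p_{r+1}^{|T|},
\]
which is exactly the claimed spread $p_{r+1}=2\epsilon^{-1}D_{r+1}/n$. The Splitting Lemma applies precisely when $D_{r+1}\ge N_0$, so the induction produces $\bm{\mc P}_r$ for all $r$ with $D_r\ge N_0$, as required; the passage to ordered $1$-factorizations and the consequences for $K_{n,n}$ and $K_{2n}$ are then the reductions of \cite{KKKMO} cited in the excerpt.

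The main obstacle is the Splitting Lemma, and specifically the \emph{clean} bound $S^{-|T|}$ carrying no constant loss: a loss of a fixed factor $c>1$ per application would compound to $c^{\Theta(\log n)}=n^{\Omega(1)}$ over the $\Theta(\log_S n)$ levels and destroy the $O(1/n)$ target (only genuinely lower-order losses could be absorbed into the slack $2\epsilon^{-1}$). To prove it I would build each $S$-way split by extracting subgraphs from locally spread distributions — e.g.\ repeatedly peeling off a perfect matching of the current graph, or directly a $D/S$-regular nice subgraph, from a distribution that is $O(1/n)$-spread on that family — establishing the local spread by a switching/entropy computation that uses a quantitative form of niceness, and using a concentration argument to verify that niceness (with a controlled parameter) survives each peeling so that the process can continue; one then shows the local spread bounds multiply exactly to $S^{-|T|}$. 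Controlling the spread constant sharply enough that it does not deteriorate across levels, while simultaneously propagating the niceness, is where essentially all of the difficulty lies; the arithmetic needed to accommodate $S^r\nmid D_0$ (so that the ``$D_r$-regular'' pieces are really only approximately regular) is a further, more routine complication absorbed into the niceness formalism.
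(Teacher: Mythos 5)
Your high-level plan --- recurse on $r$, refine each $r$-nice graph into $S$ approximately-$(D_r/S)$-regular $(r{+}1)$-nice subgraphs, and argue that the per-level loss in spread compounds to $O(1)$ over the $\Theta(\log_S n)$ levels --- is indeed the skeleton of the paper's argument, and your observation that constant-factor per-level losses are fatal while genuinely lower-order ones are tolerable is exactly right: the paper incurs an $e^{5Sq_r}$ factor per level with $q_r=D_r^{-1/8}$ (\cref{prop:spread-partitions}) and then uses $\sum_r q_r = O(1/S)$ over the admissible range to show the total blow-up is $1+o(1)$. However, your Splitting Lemma, \emph{stated as a property of an individual nice graph}, is false, and repairing it is precisely where the difficulty lives. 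Niceness (\cref{def:nice}) constrains degrees and constrains edge counts only between vertex sets of size $> n/S$; a $D$-regular nice graph can easily contain a small dense pair, e.g.\ $|A'|\asymp|B'|\asymp D$ with $|E_G(A',B')|=\Theta(D|A'|)$. Under any reasonable splitting distribution --- in particular the paper's, where each edge gets a uniform part label $\pi(e)\in[S]$ and an independent $\mathrm{Bernoulli}(q_r)$ marker $\xi(e)$, conditioned on near-regularity of all parts --- such a pair forces the auxiliary ``correction'' graph $H_i^+$ to receive on the order of $Dq_r|A'|/S$ edges between $A'$ and $B'$ with probability $\Omega(1)$, violating the boundedness condition $(E2)$ that the max-flow regularization (\cref{lem:max-flow}) inside $H_i$ requires. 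When $D_r$ is sublogarithmic there are far too many candidate small pairs to union bound over for a fixed $G$, and the dependency structure is too dense for the local lemma; so the splitting statement you want cannot hold uniformly over nice $G$.

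The paper's resolution, and the ingredient missing from your proposal, is the ``using spread to perform the union bound'' idea realized in \cref{prop:inner-iteration-key}: the splitting-type statement is proved only for $G$ drawn from an $O(D_r/n)$-spread distribution $\bm{G}_r$, because the spread bound $\bm{G}_r[T\subseteq G]\le (CD_r/n)^{|E(T)|}$ on the probability that $G$ contains a given small dense bipartite template $T$ supplies exactly the $n^{-|E(T)|}$ decay that the union bound over positions of $(A',B')$ demands, and this decay is simply not available from the structure of a single nice $G$. This makes the inductive hypothesis genuinely two-sided --- the spread of $\bm{\mc P}_r$ is both the conclusion being propagated and a necessary input to the next refinement --- whereas your induction carries only niceness forward and invokes the Splitting Lemma unconditionally. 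A secondary divergence: your suggested route to the Splitting Lemma (iteratively peeling matchings or regular subgraphs with spread established by switching/entropy) is quite different from the paper's mechanism, which is a uniform random $S$-partition plus an independent Bernoulli layer, conditioned on vertex-degree concentration via the LLL, followed by a \cref{lem:max-flow} extraction of a regular subgraph sandwiched as $H_i\setminus H_i^+\subseteq R_i\subseteq H_i$; this particular mechanism is engineered so that the resulting spread loss can be bounded via the event-specific comparison between the LLL distribution and the product measure (\cref{lem:LLL}), which is what keeps the per-level loss at the benign $e^{5Sq_r}$ rather than a constant.
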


\subsection{Techniques}
\label{sec:outline}
In \cite{KKKMO}, the authors devised an iterative edge-absorption scheme to construct an $O(\log{n}/n)$-spread distribution on decompositions of a regular, nearly-complete, bipartite graph into regular subgraphs of degree $O(\log{n})$. Briefly, using standard concentration techniques, they first (essentially uniformly) decompose the graph into nearly-regular subgraphs of average degree $\Theta(\log{n})$ satisfying some additional expansion properties, and then iteratively correct these subgraphs to be regular by combining a novel and elegant iterative edge-absorption procedure with a classical result about the existence of (large) regular subgraphs of a given graph (\cref{lem:max-flow}). The initial decomposition is clearly spread, and the key in \cite{KKKMO} is to show that the edge-absorption procedure approximately preserves the spreadness.  
At a very high level, our approach is to recursively use a simpler version of the framework of \cite{KKKMO}, which does not use iterative absorption, until the regular subgraphs in the decomposition have degrees $O(1)$. This requires a number of new ideas, which we now discuss.

\paragraph{\bf The LLL distribution} The reason why the argument of \cite{KKKMO} stops at subgraphs of degree $\Theta(\log{n})$ is that for sub-logarithmic degrees, one cannot use the union bound to guarantee near-regularity of the initial decomposition. However, if one could replace the argument of \cite{KKKMO} by an iterative procedure, where the degree of the new subgraphs is only a constant factor (say) smaller than the degree of the original graph, one might hope that replacing the union bound by the Lov\'asz Local Lemma (LLL) would immediately do the trick. Unfortunately, this is not the case -- in our setting, for graphs of sublogarithmic degree, the LLL only guarantees that the initial decomposition is nearly-regular with probability exponentially small in $n$ (up to logarithmic factors), so that conditioning on near-regularity (the resulting distribution is called the LLL distribution in this case) could change the measure by an exponentially large amount, thereby completely destroying the spreadness. To overcome this, we use the insight that we require the LLL distribution only for some specific events. For these events, we carefully use a refined comparison between the initial distribution and the LLL distribution (\cref{lem:LLL}) to show that spreadness does not degrade much. We believe that this technique of using event-specific comparison bounds for the LLL distribution will be generally useful in the study of thresholds (via constructing spread measures). 

As an interesting and somewhat related aside, we note that the distributions constructed in our proof of \cref{thm:factorization-spread} can be (approximately) sampled from in polynomial time. However, this is only possible because of recent progress in algorithms for sampling from the LLL distribution, in particular \cite{HWY, JPV}. 

\paragraph{\bf Using spread to perform the union bound} In order to find a sufficiently large regular subgraph inside the initial nearly-regular graph, we need to show that the nearly-regular graph has the property of not having `too many' edges between small sets of vertices (\cref{lem:max-flow}). For logarithmic degree, this can be accomplished by a simple union bound over all small sets; however, this fails for sublogarithmic degree, and unfortunately the dependencies are too numerous to employ the LLL. We overcome this issue using the spreadness of the iterates. Roughly, in \cref{prop:inner-iteration-key}, we show using a direct union bound argument that if a regular graph of degree $D$ is sampled from an $O(D/n)$-spread distribution, then with high probability, a random decomposition into nearly-regular subgraphs has the requisite expansion property. 

\paragraph{\bf Lossless edge-absorption} 
In \cite{KKKMO}, the iterative edge-absorption procedure blows up the spread of the initial decomposition into nearly-regular subgraphs by a factor of approximately $2$. While this is certainly sufficient for their result, it is not amenable to recursion, since to go to subgraphs of degree $O(1)$, we must recurse $\Omega(\log^* n)$ times. To circumvent this issue, we devise a simple, yet refined, edge-absorption procedure (\cref{prop:admissibile-decomposition}), which only uses edges from a randomly chosen, polynomially small part of the graph to make corrections. This ensures that the spread of the corrected distribution is approximately the same as the spread of the initial decomposition into nearly-regular subgraphs, thereby permitting recursion.

\subsection{Concurrent and independent work}
Keevash has independently and concurrently proved \cref{thm:threshold-designs} and essentialy the same result as \cref{thm:factorization-spread} (cf.~\cite[Section~2.4]{Kee22} and \cref{def:nice}) in \cite{Kee22}. His proof is substantially different from ours, relying instead on a very careful analysis of a randomized greedy process.

\section{Preliminaries}
\label{sec:prelims}
We will use the following lemma, which follows easily from the max-flow min-cut theorem (see \cite{Lovasz}), and was also employed in \cite{KKKMO}.
\begin{lem}
\label{lem:max-flow}Consider a bipartite graph
$G=(A,B,E)$. Given $f:A\to\mathbb{Z}_{\geq 0}$ and $g:B\to\mathbb{Z}_{\geq 0}$ with $\sum_{a\in A}f(a) = \sum_{b\in B}g(b)$, there
exists a spanning subgraph $H$ of $G$ with $d_{H}(a)=f(a)$ for all $a \in A$ and $d_{H}(b)=g(b)$ for all $b \in B$
if and only if the following holds: for every subset $A'\subseteq A$
and $B'\subseteq B$, $|E_{G}(A',B')|\ge\sum_{a\in A'}f(a)-\sum_{b\notin B'}g(b)$. 
\end{lem}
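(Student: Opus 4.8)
The plan is to reduce the degree-realization question to an integral maximum-flow computation and then read off the cut condition from the max-flow min-cut theorem. First I would set up an auxiliary directed network $\mc{N}$ on vertex set $\{s,t\}\cup A\cup B$ as follows: a source $s$ with an arc $s\to a$ of capacity $f(a)$ for every $a\in A$; a sink $t$ with an arc $b\to t$ of capacity $g(b)$ for every $b\in B$; and, for each edge $\{a,b\}\in E$ (with $a\in A$, $b\in B$), an arc $a\to b$ of capacity $1$. The bookkeeping claim is that spanning subgraphs $H\subseteq G$ with $d_{H}(a)=f(a)$ for all $a\in A$ and $d_{H}(b)=g(b)$ for all $b\in B$ correspond exactly to integral $s$-$t$ flows in $\mc{N}$ of value $\sum_{a\in A}f(a)$: from such a flow, saturation of the arc $s\to a$ forces the total flow on edge-arcs out of $a$ to equal $f(a)$, and --- using the hypothesis $\sum_{a}f(a)=\sum_{b}g(b)$, so that saturating all source arcs is equivalent to saturating all sink arcs --- conservation at $b$ then forces the total flow into $b$ to equal $g(b)$; the edge-arcs carrying one unit of flow form the desired $H$, and conversely. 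This bijection, together with the care needed around the equality $\sum_{a}f(a)=\sum_{b}g(b)$, is the one place where I would write things out carefully.

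Next I would apply the integrality theorem for maximum flows with integer capacities: if the maximum flow value of $\mc{N}$ is an integer $\nu$, there is an integral flow of value $\nu$. Since the cut $(\{s\},V(\mc{N})\setminus\{s\})$ has capacity exactly $\sum_{a\in A}f(a)$, the maximum flow value is at most $\sum_{a\in A}f(a)$; hence a subgraph $H$ as in the statement exists if and only if the maximum flow value equals $\sum_{a\in A}f(a)$, which by max-flow min-cut is equivalent to every $s$-$t$ cut having capacity at least $\sum_{a\in A}f(a)$.

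Finally I would translate the cut condition into the stated inequality. An $s$-$t$ cut is determined by a set $S$ with $s\in S$, $t\notin S$; write $A':=S\cap A$ and $B':=B\setminus S$. Counting the arcs leaving $S$ (arcs $s\to a$ with $a\notin A'$, edge-arcs from $A'$ to $B'$, and arcs $b\to t$ with $b\in B\setminus B'$), the cut capacity equals $\sum_{a\notin A'}f(a)+|E_{G}(A',B')|+\sum_{b\notin B'}g(b)$, and requiring this to be at least $\sum_{a\in A}f(a)$ rearranges to $|E_{G}(A',B')|\ge\sum_{a\in A'}f(a)-\sum_{b\notin B'}g(b)$. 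Since every pair $(A',B')$ with $A'\subseteq A$, $B'\subseteq B$ arises from some cut (namely $S=\{s\}\cup A'\cup(B\setminus B')$), the min-cut condition is exactly the stated condition, completing the ``if'' direction. For the easy ``only if'' direction I would argue directly: given such an $H$, for any $A',B'$ we have $|E_{G}(A',B')|\ge|E_{H}(A',B')|=\sum_{a\in A'}f(a)-|E_{H}(A',B\setminus B')|\ge\sum_{a\in A'}f(a)-\sum_{b\notin B'}g(b)$, using $d_{H}(a)=f(a)$ and $|E_{H}(A',B\setminus B')|\le\sum_{b\notin B'}d_{H}(b)=\sum_{b\notin B'}g(b)$. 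I do not expect any genuine obstacle: the only points needing attention are the clean statement of the flow-to-subgraph correspondence and the index bookkeeping in the cut computation.
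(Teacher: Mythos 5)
Your proposal is correct and follows exactly the route the paper indicates: the paper states that \cref{lem:max-flow} ``follows easily from the max-flow min-cut theorem'' and cites Lov\'asz without writing out details, and your network construction, integrality argument, cut computation, and direct verification of the easy direction are the standard way to carry this out. No gaps.
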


A key ingredient in our work is the following comparison between the so-called ``Lov\'asz Local Lemma distribution'' and the product distribution on a collection of random variables. 
(see, e.g.,~Theorem 2.1 of \cite{HSS}). The proof of this result follows directly from the inductive proof of the Lov\'asz Local Lemma (see, e.g.~\cite{AS}). 
\begin{definition}
\label{def:dependency-graph}
Given independent random variables $\{X_{i}\}_{i\in I}$
and events ${\mathcal E}_{j}$ ($j\in J$), where each ${\mathcal E}_{j}$
depends on a subset $S_{j} \subseteq I$ of variables, we say that a graph $\Gamma$
on vertex set $J$ is a dependency graph if it holds that $S_{j}\cap S_{j'}=\emptyset$
whenever $j$ and $j'$ are not adjacent in $\Gamma$.
\end{definition}
\begin{proposition}
\label{lem:LLL}
With notation as in \cref{def:dependency-graph}, denote by ${\bf P}$ the usual product measure on the random variables $\{X_i\}_{i \in I}$ and by
$\mathbb{P}$, the conditional measure ${\bf P}(\cdot \mid \cap_{j}{\mathcal E}_j^{c})$. 
Assume that ${\bf P}({\mathcal E}_{j})\le p$ for all
$j \in J$ and that the maximum degree of $\Gamma$, denoted by $\Delta$, satisfies $4p\Delta\le1$.
Given an event ${\mathcal E}$ depending on a subset of variables $S \subseteq I$, letting
$N$ be the number of events ${\mathcal E}_{j}$ $(j\in J)$  with $S_{j}\cap S\ne\emptyset$,
we have 
\[
\mathbb{P}[{\mathcal E}]\le{\bf P}[{\mathcal E}]\exp(6pN).
\]
\end{proposition}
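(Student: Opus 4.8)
The plan is to run the classical inductive proof of the Lovász Local Lemma while carefully tracking the multiplicative loss incurred by the extra conditioning. First I would fix the uniform weights $x_j := 2p$ for all $j \in J$ and verify the standard (asymmetric) local lemma hypothesis $\mathbf{P}(\mathcal{E}_j) \le x_j \prod_{k \sim j}(1-x_k)$. Since each vertex of $\Gamma$ has degree at most $\Delta$, Bernoulli's inequality gives $x_j \prod_{k \sim j}(1-x_k) \ge 2p(1-2p)^{\Delta} \ge 2p(1 - 2p\Delta) \ge 2p \cdot \tfrac12 = p \ge \mathbf{P}(\mathcal{E}_j)$, where I used $4p\Delta \le 1$ (which also guarantees $2p \le \tfrac12$, so all quantities lie in the valid range). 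With this choice of weights in hand, I would invoke the core estimate underlying the inductive proof of the LLL (see \cite{AS}): for every $j \in J$ and every $T \subseteq J \setminus \{j\}$ one has $\mathbf{P}\big[\mathcal{E}_j \mid \bigcap_{k\in T}\mathcal{E}_k^c\big] \le x_j = 2p$, and in particular all the conditionings appearing below are on positive-probability events. I would cite this rather than reprove it, since the excerpt already flags that the result ``follows directly from the inductive proof of the Lovász Local Lemma.''

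The only genuinely new step is transferring the conclusion from the bad events $\mathcal{E}_j$ to the arbitrary event $\mathcal{E}$ with variable set $S$. Partition $J = J_1 \sqcup J_2$ with $J_1 := \{j \in J : S_j \cap S \neq \emptyset\}$, so that $|J_1| = N$, and write
$$\mathbb{P}[\mathcal{E}] = \mathbf{P}\Big[\mathcal{E} \ \Big|\ \bigcap_{j \in J}\mathcal{E}_j^c\Big] = \frac{\mathbf{P}\big[\mathcal{E} \cap \bigcap_{j \in J_1}\mathcal{E}_j^c \ \big|\ \bigcap_{j \in J_2}\mathcal{E}_j^c\big]}{\mathbf{P}\big[\bigcap_{j \in J_1}\mathcal{E}_j^c \ \big|\ \bigcap_{j \in J_2}\mathcal{E}_j^c\big]}.$$
Because $\mathcal{E}$ and every $\mathcal{E}_j$ with $j \in J_2$ are determined by disjoint blocks of independent variables, $\mathcal{E}$ is independent of $\bigcap_{j \in J_2}\mathcal{E}_j^c$, so the numerator is at most $\mathbf{P}\big[\mathcal{E} \mid \bigcap_{j\in J_2}\mathcal{E}_j^c\big] = \mathbf{P}[\mathcal{E}]$. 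For the denominator, enumerate $J_1 = \{j_1,\dots,j_N\}$ and expand as a telescoping product $\prod_{i=1}^{N}\big(1 - \mathbf{P}\big[\mathcal{E}_{j_i} \mid \bigcap_{l<i}\mathcal{E}_{j_l}^c \cap \bigcap_{j\in J_2}\mathcal{E}_j^c\big]\big)$; the conditioning index set for the $i$-th factor lies in $J \setminus \{j_i\}$, so the core estimate from the first paragraph bounds each inner probability by $2p$, giving a lower bound of $(1-2p)^N$. Combining the two bounds yields $\mathbb{P}[\mathcal{E}] \le \mathbf{P}[\mathcal{E}](1-2p)^{-N}$.

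It then remains to pass to the exponential form. Since $2p \le \tfrac12$ and $-\log(1-x) \le 3x$ on $[0,\tfrac12]$ (the difference vanishes at $x = 0$ and is non-increasing on that interval), we get $(1-2p)^{-N} = \exp\big(-N\log(1-2p)\big) \le \exp(6pN)$, which is exactly the claimed bound. I do not expect any real obstacle here: the substantive content is entirely the standard inductive LLL estimate, and the rest is the numerator/denominator bookkeeping above — the only points requiring a moment's care are checking that the weights $x_j = 2p$ are admissible under the hypothesis $4p\Delta \le 1$ and that the numerator step legitimately uses independence across the variable blocks indexed by $J_2$.
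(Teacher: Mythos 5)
Your proof is correct and follows essentially the route the paper intends: it is the standard argument behind \cite{HSS}, Theorem 2.1, namely the core inductive LLL estimate $\mathbf{P}\big[\mathcal{E}_j \mid \bigcap_{k\in T}\mathcal{E}_k^c\big]\le x_j$ with uniform weights $x_j=2p$ (valid since $4p\Delta\le 1$), followed by splitting off the $N$ bad events meeting $S$, using independence across disjoint variable blocks for the numerator, and bounding the telescoped denominator below by $(1-2p)^{N}\ge e^{-6pN}$. The only immaterial nitpick is that your parenthetical claim that $4p\Delta\le1$ forces $2p\le\tfrac12$ requires $\Delta\ge1$, which is the only case of interest here.
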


Finally, we will use the following standard version of the Chernoff bound. 
\begin{lem}
\label{lem:Chernoff}Let $X_{1},\dots,X_{m}$ be independent $\operatorname{Bernoulli}(p)$
random variables. Then, for $\delta>0$, 
\begin{align*}
\mathbb{P}[X_{1}+\dots+X_{m}>(1+\delta) pm]
&\le\left(\frac{e^{\delta}}{(1+\delta)^{1+\delta}}\right)^{pm} \leq \exp\left(-\frac{\delta^2 pm}{2+\delta}\right)\\
\mathbb{P}[X_{1}+\dots+X_{m} < (1-\delta) pm]
&\le\left(\frac{e^{-\delta}}{(1-\delta)^{1-\delta}}\right)^{pm} \leq \exp\left(-\frac{\delta^2pm}{2}\right).
\end{align*}
\end{lem}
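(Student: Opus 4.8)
The plan is to use the standard exponential-moment (Bernstein--Chernoff) method. Write $S=X_1+\dots+X_m$ and $\mu=\mathbb{E}[S]=pm$. For the upper tail, fix $t>0$ and apply Markov's inequality to $e^{tS}$, using independence of the $X_i$ and the Bernoulli moment generating function $\mathbb{E}[e^{tX_i}]=1+p(e^t-1)$:
\[
\mathbb{P}[S>(1+\delta)\mu]\le e^{-t(1+\delta)\mu}\,\mathbb{E}[e^{tS}]=e^{-t(1+\delta)\mu}\bigl(1+p(e^t-1)\bigr)^m\le \exp\!\bigl(\mu(e^t-1-t(1+\delta))\bigr),
\]
where the last step uses $1+x\le e^x$ with $x=p(e^t-1)$. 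Minimizing the exponent over $t>0$ gives $t=\log(1+\delta)>0$ and the value $\delta-(1+\delta)\log(1+\delta)$, hence $\mathbb{P}[S>(1+\delta)\mu]\le\bigl(e^\delta/(1+\delta)^{1+\delta}\bigr)^{pm}$, which is the first claimed bound.

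For the lower tail we may assume $\delta\in(0,1)$, since $S\ge 0$ makes $\{S<(1-\delta)\mu\}$ empty when $\delta\ge 1$. Arguing symmetrically with $e^{-tS}$ for $t>0$,
\[
\mathbb{P}[S<(1-\delta)\mu]\le e^{t(1-\delta)\mu}\,\mathbb{E}[e^{-tS}]\le\exp\!\bigl(\mu(e^{-t}-1+t(1-\delta))\bigr),
\]
and minimizing over $t>0$ yields $t=-\log(1-\delta)>0$ and the value $-\delta-(1-\delta)\log(1-\delta)$, giving $\mathbb{P}[S<(1-\delta)\mu]\le\bigl(e^{-\delta}/(1-\delta)^{1-\delta}\bigr)^{pm}$, the first lower-tail bound.

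It remains to deduce the cleaner exponential forms, which are two elementary scalar inequalities. For the upper tail, set $\phi(\delta):=(1+\delta)\log(1+\delta)-\delta-\tfrac{\delta^2}{2+\delta}$; then $\phi(0)=0$ and $\phi'(\delta)=\log(1+\delta)-\tfrac{\delta(4+\delta)}{(2+\delta)^2}$, which is nonnegative because $\log(1+\delta)\ge\tfrac{2\delta}{2+\delta}\ge\tfrac{\delta(4+\delta)}{(2+\delta)^2}$ for $\delta\ge 0$; hence $\phi\ge 0$, i.e.\ $\delta-(1+\delta)\log(1+\delta)\le-\tfrac{\delta^2}{2+\delta}$, which combined with the bound above gives $\mathbb{P}[S>(1+\delta)\mu]\le\exp(-\tfrac{\delta^2 pm}{2+\delta})$. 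For the lower tail, set $\psi(\delta):=\delta+(1-\delta)\log(1-\delta)-\tfrac{\delta^2}{2}$; then $\psi(0)=0$ and $\psi'(\delta)=-\log(1-\delta)-\delta\ge 0$ on $[0,1)$ since $-\log(1-\delta)=\sum_{k\ge 1}\delta^k/k\ge\delta$; hence $\psi\ge 0$ and $\mathbb{P}[S<(1-\delta)\mu]\le\exp(-\tfrac{\delta^2 pm}{2})$, which extends trivially to $\delta\ge 1$ since the left-hand side is then $0$.

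There is no serious obstacle here — this is a textbook computation. The only mildly delicate points are (i) carrying out the two optimizations over $t$ and (ii) verifying the scalar inequalities $\phi\ge 0$ and $\psi\ge 0$ cleanly, together with being careful about the range of $\delta$ for the lower tail, where the sharp form $(1-\delta)^{1-\delta}$ only makes literal sense for $\delta<1$ while the $\exp(-\delta^2 pm/2)$ form is valid for all $\delta>0$.
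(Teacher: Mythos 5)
Your proof is correct: it is the standard exponential-moment (Chernoff) argument, with the optimization over $t$ carried out correctly and the two scalar inequalities $\delta-(1+\delta)\log(1+\delta)\le-\delta^2/(2+\delta)$ and $-\delta-(1-\delta)\log(1-\delta)\le-\delta^2/2$ verified cleanly. The paper states this lemma without proof as a standard fact, and your derivation is exactly the textbook argument it implicitly relies on; your remark about the range of $\delta$ in the lower tail is a sensible extra precaution but not needed anywhere in the paper, where the lower-tail bound is only invoked with small $\delta$.
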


\section{Proof of \cref{thm:factorization-spread}}
\subsection{Recursion}
Throughout this section, we use the notation appearing in the statement of \cref{thm:factorization-spread}. In the following definition, $S$ denotes a positive integer which, in our application, will be chosen to be sufficiently large depending on $\epsilon$. We define
\[q_{r} := D_{r}^{-1/8}, \quad \delta_{r} := 2^{10}\cdot S\cdot \sum_{r'\leq r}q_{r}.\] 

\begin{definition}[$r$-niceness]
\label{def:nice}
For $r \in \mb{Z}_{\geq 0}$, we say that a bipartite graph $G = (A,B,E)$ with $|A|=|B|=n$ is $r$-nice if the following properties hold: 
\begin{enumerate}[(N1)]
    \item $G$ is $D_G$-regular, with 
    \[D_G = \exp(\pm \delta_{r-1})\cdot D_{r}\]
    \item For all $A'\subseteq A,B'\subseteq B$ with $|B'|\ge|A'| > n/S$ and $n-|B'|=1.01|A'|$,
\[
|E_{G}(A',B\setminus B')|\le \exp(1+ \delta_{r-1}) \cdot D_G|A'|\cdot \max\left(\frac{1}{3},\frac{n-|B'|}{n}\right),
\]
and similarly with the role of $A'$ and $B'$ interchanged.
\end{enumerate}

For $r \in \mb{Z}_{\geq 0}$ and a regular bipartite graph $\mb{G} = (A,B,E)$ with $|A|=|B|=n$ and degree $D_0$, a decomposition $\mc{P}$ of $E(\mb{G})$ is said to be $r$-nice if $|\mc{P}| = D_0/D_{r} (=S^{r})$ and every $G \in \mc{P}$ is $r$-nice.

\end{definition}

\cref{thm:factorization-spread} follows immediately from the following proposition, whose proof is the content of the remainder of this paper. 

\begin{thm}
\label{thm:factorization-spread-precise}
For any $\epsilon \in (0,1/2)$, there exist sufficiently large constants $N_0, S \in \mb{N}$ for which the following holds. Let ${\mathbb G} = (A,B,E)$ be a bipartite graph with $|A|=|B|=n$ which is $D_0$-regular for $D_0 \geq \epsilon n \geq N_0$ and $0$-nice (\cref{def:nice}). For $r \in \mb{N}$, let $D_{r} := D_0/S^{r}$.  
Then, for all $r \in \mb{Z}_{\geq 0}$ such that $D_{r} \geq N_0$, there exists a probability distribution $\bm{\mc{P}}_r$ on decompositions $\mc{P}_r$ of $E(\mb{G})$, supported on $r$-nice decompositions, with spread $2\epsilon^{-1}\cdot D_r/n$. 
\end{thm}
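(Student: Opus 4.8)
The plan is to prove \cref{thm:factorization-spread-precise} by induction on $r$, with the base case $r=0$ being exactly the hypothesis that $\mathbb{G}$ is $0$-nice (the trivial decomposition $\mathcal{P}_0 = \{\mathbb{G}\}$ is $0$-nice, and a point mass is $D_0/n$-spread, hence certainly $2\epsilon^{-1}D_0/n$-spread). For the inductive step, suppose we have a distribution $\bm{\mathcal{P}}_r$ on $r$-nice decompositions with spread $2\epsilon^{-1}D_r/n$; we must produce $\bm{\mathcal{P}}_{r+1}$ on $(r+1)$-nice decompositions with the same spread bound (with $D_{r+1} = D_r/S$). The idea is: sample $\mathcal{P}_r$ from $\bm{\mathcal{P}}_r$, then independently refine each $r$-nice graph $G \in \mathcal{P}_r$ into $S$ many $(r+1)$-nice graphs; concatenating these refinements over all $G \in \mathcal{P}_r$ yields a decomposition $\mathcal{P}_{r+1}$ into $S^{r+1}$ parts. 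Since the refinement step is what introduces new randomness, the key technical engine is a \emph{one-step splitting lemma}: given a single $r$-nice graph $G$, construct a distribution on its decompositions into $S$ graphs, each of which is $(r+1)$-nice, such that this distribution is $(1+o(1))/S$-spread \emph{relative to} $E(G)$ (i.e.\ the probability that a fixed set $T \subseteq E(G)$ lands entirely in one specified part is at most roughly $S^{-|T|}$, and more generally distributed appropriately across parts). Spread then composes: if $\bm{\mathcal{P}}_r$ is $\kappa_r$-spread and each refinement is $\kappa'$-spread relative to its graph, the composed distribution is $\kappa_r \kappa' \cdot (\text{something})$-spread; one checks $2\epsilon^{-1}D_r/n \cdot (1+o(1))S^{-1} \cdot (\dots) \le 2\epsilon^{-1}D_{r+1}/n$ because $D_{r+1}/D_r = 1/S$, so we just need the refinement spread times the structural overhead to be at most $1/S$ up to constants absorbed by the slack — this is exactly the ``lossless edge-absorption'' point advertised in the techniques section.

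The one-step splitting lemma itself is where the three advertised ideas enter, and I would build it in three layers. \textbf{(i) Initial LLL decomposition.} Color each edge of $G$ uniformly at random with one of $S$ colors, independently. By linearity the expected degree of each vertex in each color class is $D_G/S \approx D_{r+1}$; by Chernoff each ``bad degree'' event has probability exponentially small in $D_{r+1}$, and each such event depends on the colors of $D_G$ edges, so the dependency degree is $O(D_G)$. When $D_{r+1} \gg \log n$ the union bound gives near-regularity whp, but for $D_{r+1} = O(1)$ we are forced to condition on the conjunction of all near-regularity events — the LLL distribution — and here \cref{lem:LLL} is used to show that for the \emph{specific} events we care about (namely the $r+1$-niceness structural events and the spread-witnessing events ``$T \subseteq$ color class $c$'' for small edge sets $T$), the conditional probability is at most the product-measure probability times $\exp(6pN)$, where $N$ is the number of bad-degree events touching the (small) support of the event; since $N = O(|T| D_G)$ and $p$ is exp-small in $D_{r+1}$, for $|T|$ not too large the correction factor $\exp(6pN)$ is $1+o(1)$, so spreadness is essentially preserved. \textbf{(ii) Correction to exact regularity.} The color classes from step (i) are only near-regular; to make each one exactly $D_{r+1}$-regular I would apply \cref{lem:max-flow} (as in \cite{KKKMO}) to move a small, \emph{polynomially-small} set of edges between classes — the refinement over \cite{KKKMO} is to first randomly designate a tiny ``reservoir'' sub-decomposition and only draw correction edges from it, so that the edges used for correction form a $\mathrm{poly}(1/D)$-fraction and the induced blow-up in spread is $(1+D^{-\Omega(1)})$ rather than a constant factor. \textbf{(iii) Verifying niceness (N1),(N2) of the output.} (N1) is just the degree equality after correction, with the $\exp(\pm\delta_r)$ window coming from accumulating the $q_{r'}$ errors, which is why $\delta_r = 2^{10} S \sum_{r' \le r} q_{r'}$ is defined as a running sum. (N2), the expansion/no-many-edges-between-small-sets property, is where the second technique — \emph{using spread to do the union bound} — is invoked: rather than union-bounding over all small $A', B'$ against the product measure (which fails at sublogarithmic degree), one shows via \cref{prop:inner-iteration-key}-type reasoning that because the \emph{input} $G$ was drawn from a $D_r/n$-spread distribution, the probability that the random refinement creates a violation of (N2) in any part is controlled — intuitively, spread of the input already forbids too many edges concentrating between small sets, and random splitting cannot create such concentration except with probability we can bound by summing $\kappa_r^{|E_G(A',B')|}$ over the (few) relevant configurations.

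Assembling: I would first state and prove the one-step splitting lemma as a standalone proposition (this is morally \cref{prop:admissibile-decomposition} together with \cref{prop:inner-iteration-key} and \cref{lem:LLL} glued via \cref{def:nice}), then prove \cref{thm:factorization-spread-precise} by the clean induction above, checking at the end that the constants $N_0, S$ can be chosen (depending only on $\epsilon$) so that (a) $4p\Delta \le 1$ holds in \cref{lem:LLL} at every scale with $D_r \ge N_0$, (b) the running error $\delta_r$ stays below, say, $1/2$ uniformly (this needs $\sum_{r'} q_{r'} = \sum_{r'} D_{r'}^{-1/8} = \sum D_0^{-1/8} S^{r'/8}$ to converge — wait, it grows; in fact $q_r$ is \emph{increasing} in $r$ as $D_r$ shrinks, so the sum is dominated by its last term $q_r \le N_0^{-1/8}$, giving $\delta_r \le 2^{10} S \cdot O(1) \cdot N_0^{-1/8}$, small once $N_0$ is large relative to $S$), and (c) the multiplicative spread overhead per step, $(1+o(1))$, multiplied over the at most $\log_S(D_0/N_0) = O(\log n)$ steps, stays below $2$ — this last point forces the per-step overhead to be $1 + O(1/\log n)$ or better, which is precisely why step (ii) must be lossless (polynomially small correction) rather than constant-factor. \textbf{The main obstacle I anticipate} is exactly this compounding-of-errors bookkeeping in step (i)/(ii): making the LLL-comparison factor $\exp(6pN)$ and the edge-absorption blow-up simultaneously small enough that their product over $\Theta(\log n)$ recursive levels does not exceed the factor-$2$ slack built into the target spread $2\epsilon^{-1}D_r/n$, while still allowing $|T|$ (the size of the edge set being tested for spread) to range up to the value needed — since spread must be verified for \emph{all} $T$, and the LLL correction $\exp(6p|T|D_G)$ degrades with $|T|$, one has to handle large $T$ by a separate, cruder argument (e.g.\ a direct bound that is not $(1+o(1))$ but whose loss is dominated by $S^{-|T|}$ being astronomically small). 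Reconciling the small-$T$ regime (delicate LLL comparison) with the large-$T$ regime (brute-force) inside a single spread bound is the crux of the whole argument.
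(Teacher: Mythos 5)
Your sketch mirrors the paper's proof in all essentials: the same base case $r=0$, the same three-layer one-step splitting (sample a uniform $S$-coloring $\pi$ plus a $q_r$-density reservoir $\xi$, condition on near-regularity to get the LLL distribution, then correct to exact regularity via \cref{lem:max-flow} using only reservoir edges), and the same bookkeeping observation that $\sum_{r'} q_{r'}$ is a geometric series dominated by its last term $\le N_0^{-1/8}$, so the cumulative spread overhead $\exp(O(S\sum q_{r'}))$ stays within the factor-$2$ slack once $N_0$ is large relative to $S$. The one obstacle you flag as the crux — reconciling small and large $|T|$ in the LLL comparison — does not actually arise: the correction factor $\exp(6pN)$ in \cref{lem:LLL} has $N\le 4|T|$ and $p\le D_r^{-20}$, so it factorizes as $(1+D_r^{-\Omega(1)})^{|T|}$, i.e.\ it is a per-edge multiplicative loss that is absorbed uniformly in $|T|$ into the $e^{O(Sq_r)}$ per-step overhead, with no separate large-$T$ regime needed. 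One small misattribution: in the paper the spread-based union bound (\cref{prop:inner-iteration-key}) is used to establish property $(E2)$ — the upper bound on $|E_{H_i^+}(A',B')|$ for small $A',B'$ — which is what makes the max-flow existence argument close in Case~3; the $(N2)$-niceness of the output parts is verified from $(E3),(E4)$, which need only Chernoff plus the LLL comparison and not the input spread.
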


For $r = 0$, note that the distribution supported on the trivial decomposition consisting of only one part satisfies the conclusion of \cref{thm:factorization-spread-precise}. The distributions for $r \geq 1$ will be constructed recursively in \cref{prop:inner-iteration-key}, starting from this choice of $\bm{\mc{P}}_0$.   

\subsection{Admissibility} The recursive construction used to prove \cref{thm:factorization-spread-precise} is based on a procedure to partition a regular bipartite graph $G$ into regular subgraphs. Throughout this subsection, we consider $r \in \mb{Z}_{\geq 0}$ and a $D_G$-regular bipartite graph $G=(A,B,E)$ with $|A|=|B|=n$ and $D_G \geq N_0$, which is $r$-nice (\cref{def:nice}) with respect to the sequence $D_{r}$ in the statement of \cref{thm:factorization-spread-precise}. Recall that $S \in \mb{N}$ will be chosen to be sufficiently large. %

Consider the following collection of random variables:
\begin{itemize}
    \item For each edge $e\in G$, $\pi(e)$ is supported in $[S]( = [s_0 - 1])$.

    \item For each edge $e \in G$, $\xi(e)$ is supported in $\{0,1\}$. 
\end{itemize}

To this collection of random variables, we associate the following collection of subgraphs of $G$:
\begin{itemize}
    \item For $i \in [S]$, $H_i$ denotes the subgraph of $G$ consisting of all edges $e$ for which $\pi(e) = i$. Succinctly, $H_i = \pi^{-1}(i)$.

    \item For $i \in [S]$, $H_i^{+} = H_{i} \cap \{e : \xi(e) = 1\} = H_{i} \cap \xi^{-1}(1)$. %
    \item Let $H^+ = H_1^+\cup \dots \cup H_S^+$. 
\end{itemize}

The next definition collects the properties we will require of these random variables.

\begin{definition}[$r$-admissibility]
\label{def:admissibility}
With notation as above, %
a realisation of the collection of random variables $\{\pi(e), \xi(e)\}_{e \in G}$ is said to be $r$-admissible if the following properties hold:
\begin{enumerate}[(R1)]

    \item for all $v \in V(G)$, for all $i \in [S]$, 
    \[|d_{H_i}(v) - D_G/S| \leq 9\sqrt{\log{D_G}\cdot D_G/S};\]
    
    \item for all $v \in V(G)$, for all $i \in [S]$, 
    \[|d_{H_i^+}(v) - q_rD_G/S| \leq 9\sqrt{\log{D_G}\cdot D_G/S}; \]
    
\end{enumerate}
\begin{enumerate}[(E1)]
    \item for all $i \in [S]$ and all $A'\subseteq A, B'\subseteq B$ with either $|B'|\geq |A'| \geq 4n/5$ or $4n/5 \geq |A'| \geq n/S$ and $n-|B'| < 1.01|A'|$
    \[|E_{H_i^+}(A',B')| \geq (9/10)\cdot |E_G(A',B')|q_r/S,\]
  
    \item for all $i \in [S]$ and all $A'\subseteq A, B'\subseteq B$ with $|A'| \leq n/S$ and $|B'| = 1.01|A'|$ 
    \[|E_{H_i^+}(A',B')| \leq ({D_Gq_r}/{2S})\cdot |A'|,\]
    
    \item for all $i \in [S]$ and all $A'\subseteq A, B'\subseteq B$ with $|B'| \geq |A'| \geq n/S$ and $n-|B'| = 1.01|A'|$,
    \[\left|E_{H_i}(A',B\setminus B')\right| \leq \exp\left(1 + q_{r} + \delta_{r-1}\right)D_G|A'|\beta'/S,\]
    where $\beta' = \max(\kappa, 1-|B'|/n)$,   
    \item for all $i \in [S]$ and all $A'\subseteq A, B'\subseteq B$ with $|B'| \geq |A'| \geq n/S$ and $n-|B'| = 1.01|A'|$,
     \[\left|E_{H_i^+}(A',B\setminus B')\right| \leq 4D_G|A'|\beta'q_{r}/S,
     \]
  where $\beta' = \max(\kappa, 1-|B'|/n)$, 
\end{enumerate}
and similarly the properties $(E1),(E2),(E3),(E4)$ with the roles of $A'$ and $B'$ interchanged. 
\end{definition}

The motivation for the definition of $r$-admissibility comes from the next proposition which shows that given an $r$-admissible realisation, one can decompose an $r$-nice graph $G$ into $(r+1)$-nice regular subgraphs of degree approximately $D_G/S$ in a manner that will turn out to be sufficiently `spread'.  

\begin{proposition}
\label{prop:admissibile-decomposition}
Given any $r$-admissible realisation of $\{\pi(e), \xi(e)\}_{e\in G}$, %
there exists a disjoint collection of regular subgraphs $\{R_i\}_{i \in [S]}$ of $G$ satisfying the following properties:
\begin{enumerate}[(P1)]
    \item $G = R_1 \cup \dots \cup R_{S}$; 
   \item for all $i \in [S-1]$, $H_i \setminus H_i^+ \subseteq R_{i} \subseteq H_i$; %
   \item $H_S \subseteq R_S \subseteq H_S \cup H^+$;
   \item for all $i\in [S]$, the degree of each $R_{i}$ is in $(1\pm 2Sq_{r})D_G/S$; 
\item for all $i \in [S]$, $R_i$ is $(r+1)$-nice i.e.~it satisfies properties $(N1)$ and $(N2)$ in \cref{def:nice} with $D_G := D_{R_i}$ and $D_{r+1}$. 
\end{enumerate}
\end{proposition}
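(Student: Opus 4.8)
The plan is to build each $R_i$ (for $i<S$) from $H_i$ by \emph{deleting} a subgraph $F_i\subseteq H_i^+$ and \emph{absorbing} all the deleted edges into $R_S$, i.e.\ $R_S:=H_S\cup\bigcup_{i<S}F_i$. With this recipe, (P1)--(P3) and the disjointness of the $R_i$ are automatic, and every $R_i$ with $i<S$ is $D^*$-regular for whatever common degree $D^*$ we prescribe for $F_i$, while $R_S$ is automatically $D_{R_S}$-regular with $D_{R_S}=D_G-(S-1)D^*$ — because moving $F_i$ from $H_i$ to $R_S$ is degree-preserving in aggregate. I would take $D^*:=\lceil(1-q_r)D_G/S+18\sqrt{\log D_G\cdot D_G/S}\,\rceil$; then by (R1)--(R2) we have $D^*\in\big((1-q_r)D_G/S,\,D_G/S\big)$ up to an additive $O(\sqrt{\log D_G\cdot D_G/S})$, and one checks that $D_{R_S}=D_G-(S-1)D^*$ also lies in $(1\pm 2Sq_r)D_G/S$, using $(S-1)(D_G/S-D^*)\le q_rD_G$ and $D_{R_S}\ge D_G/S$. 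These manipulations require $N_0$ large in terms of $S$, so that $q_r^2D_G/S\gtrsim D_G^{3/4}/S$ dominates every stray term of order $\sqrt{\log D_G\cdot D_G/S}$. This gives (P4); and since $D_G=\exp(\pm\delta_{r-1})D_r$, $D_{r+1}=D_r/S$, and $\delta_r-\delta_{r-1}=2^{10}Sq_r$, property (P4) immediately upgrades to property (N1) of (P5).

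To produce the deletion subgraphs, fix $i<S$, set $f(v):=d_{H_i}(v)-D^*$, and note that $0\le f(v)\le d_{H_i^+}(v)$ for all $v$ by (R1)--(R2) and the choice of $D^*$. By \cref{lem:max-flow} applied to the bipartite graph $H_i^+$, a spanning subgraph $F_i\subseteq H_i^+$ with $d_{F_i}\equiv f$ exists as soon as $\sum_{a\in A}f(a)=\sum_{b\in B}f(b)$ — which holds, both sides being $|E(H_i)|-nD^*$ — and the defect inequality $|E_{H_i^+}(A',B')|\ge\sum_{a\in A'}f(a)-\sum_{b\in B\setminus B'}f(b)$ holds for all $A'\subseteq A$ and $B'\subseteq B$. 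Substituting $f(v)=d_{H_i}(v)-D^*$ and using $\sum_{a\in A'}d_{H_i}(a)-\sum_{b\in B\setminus B'}d_{H_i}(b)=|E_{H_i}(A',B')|-|E_{H_i}(A\setminus A',B\setminus B')|$, the defect inequality becomes
\[
|E_{H_i^+}(A',B')|+|E_{H_i}(A\setminus A',B\setminus B')|\ \ge\ |E_{H_i}(A',B')|+\big(|B\setminus B'|-|A'|\big)D^*.
\]

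Verifying this inequality for all $A',B'$ is the technical heart of the proof and the step I expect to be the main obstacle. I would organise a case analysis according to whether $|B\setminus B'|$ is smaller or larger than $|A'|$, and whether the sets lie below $n/S$, between $n/S$ and $4n/5$, or above $4n/5$. When $|B\setminus B'|\le|A'|$ the last term is nonpositive, and one finishes by degree counting from (R1)--(R2): the quantity $|E_{H_i}(A',B')|-|E_{H_i^+}(A',B')|=|E_{H_i\setminus H_i^+}(A',B')|$ is at most $|A'|\big((1-q_r)D_G/S+O(\sqrt{\log D_G\cdot D_G/S})\big)\le|A'|D^*$, and either the slack $(|A'|-|B\setminus B'|)D^*$ or the surplus hidden in $|E_{H_i}(A\setminus A',B\setminus B')|$ absorbs the rest — the only delicate point being that the gap $D_G/S-D^*\approx q_rD_G/S$ must be played off against the additive errors, which $N_0$ large in terms of $S$ makes possible since then $q_rD_G/S\gg\sqrt{\log D_G\cdot D_G/S}$. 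The genuinely tight case is the nearly balanced one, $|B\setminus B'|$ a shade larger than $|A'|$ with both a constant fraction of $n$ — precisely the configuration $n-|B'|=1.01|A'|$ appearing in (E3), (E4) and in the hypothesis (N2) for $G$. There I would combine: (E3) for $H_i$ to upper bound $|E_{H_i}(A',B\setminus B')|$ and hence lower bound $|E_{H_i}(A\setminus A',B\setminus B')|$; (E4) to control $|E_{H_i^+}|$ across the cut; the lower bound (E1) relating $|E_{H_i^+}(A',B')|$ to $|E_G(A',B')|$ when $|A'|\ge n/S$, together with (N2) for $G$ to bound the term subtracted in $|E_G(A',B')|=|A'|D_G-|E_G(A',B\setminus B')|$; and, in the sub-regime $|A'|\le n/S$ where (E1) is not available, the upper bound (E2) on $|E_{H_i^+}(A',B\setminus B')|$. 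The numerical constants in (E1)--(E4) are calibrated so that these estimates close exactly; tracking them through all sub-cases, together with the symmetric cases (roles of $A'$ and $B'$ interchanged), is where the bulk of the effort goes.

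Finally, property (N2) for the $R_i$ follows cheaply from what is already established. For $i<S$, $R_i\subseteq H_i$ gives $|E_{R_i}(A',B\setminus B')|\le|E_{H_i}(A',B\setminus B')|$, so (N2) for $R_i$ reduces via (E3) to $\exp(1+q_r+\delta_{r-1})D_G/S\le\exp(1+\delta_r)D^*$, which holds since $D^*\ge\exp(-2q_r)D_G/S$ and $\delta_r-\delta_{r-1}=2^{10}Sq_r\ge 3q_r$. For $R_S$, $R_S\subseteq H_S\cup H^+$ gives $|E_{R_S}(A',B\setminus B')|\le|E_{H_S}(A',B\setminus B')|+\sum_{j\in[S]}|E_{H_j^+}(A',B\setminus B')|$, which by (E3) (for $H_S$) and $S$ applications of (E4) is at most $D_G|A'|\beta'\big(\exp(1+q_r+\delta_{r-1})/S+4q_r\big)$ with $\beta'$ as in (E3)--(E4); comparing this against $\exp(1+\delta_r)D_{R_S}|A'|\beta'$ and using $D_{R_S}\ge D_G/S$ reduces (N2) for $R_S$ to $\exp(1+q_r+\delta_{r-1})+4Sq_r\le\exp(1+\delta_r)$, which holds with large room since $4Sq_r\le(2^{10}-1)Sq_r$. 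This establishes (P5), and with it the proposition.
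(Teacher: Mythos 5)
Your construction is, after complementation, the same as the paper's: finding $F_i\subseteq H_i^+$ with $d_{F_i}(v)=d_{H_i}(v)-D^*$ is identical to finding $F_i'=H_i^+\setminus F_i$ with $d_{F_i'}(v)=D^*-d_{K_i}(v)$ (where $K_i=H_i\setminus H_i^+$) and setting $R_i=K_i\cup F_i'$, which is exactly how the paper proceeds, with its regularization degree $d=d_{K_i}+10^5\sqrt{(D_G/S)\log D_G}$ playing the role of your $D^*$. Your handling of (P1)--(P4), of (N1), and of (N2) via (E3) and $S$ applications of (E4) is correct and matches the paper's computation. However, the single application of \cref{lem:max-flow} --- verifying the defect inequality for \emph{all} $A'\subseteq A$, $B'\subseteq B$ --- is the entire technical content of the proposition, and you explicitly leave it as ``the main obstacle'' and ``where the bulk of the effort goes.'' That is the gap: the statement you must prove is precisely this family of inequalities, and a plan with unchecked constants is not a proof of it.

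Moreover, the sketch you give for that verification is off in ways that matter. First, your claim that the case $|B\setminus B'|\le|A'|$ ``finishes by degree counting from (R1)--(R2)'' is false as stated: pure degree counting bounds the defect by $\sum_{a\in A'}(D^*-d_{K_i}(a))-\sum_{b\in B\setminus B'}(D^*-d_{K_i}(b))$ plus the discarded term $|E_{H_i^+}(A',B\setminus B')|$, and when $|B\setminus B'|$ is close to $|A'|$ (e.g.\ $|A'|=|B\setminus B'|=n/2$) the slack $(|A'|-|B\setminus B'|)D^*$ vanishes and an error of order $n\sqrt{(D_G/S)\log D_G}$ survives; closing it genuinely requires the expansion inputs --- (E1) together with the niceness-derived lower bound $|E_G(A',B')|\ge D_G|A'|/100$ (cf.\ \cref{rmk:edge-lb}), and (E2), (R2) in the small-set regime --- exactly as in the paper's Cases 1--3. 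Second, the tools you nominate for the ``tight case,'' (E3) and (E4), are not the right ones there: they enter only in the (N2) bookkeeping, are stated only at the exact ratio $n-|B'|=1.01|A'|$, and upper-bound cut edges rather than supplying the lower bound on $|E_{H_i^+}(A',B')|$ that the defect condition needs. Third, your minimal choice $D^*=\lceil(1-q_r)D_G/S+18\sqrt{\log D_G\cdot D_G/S}\rceil$ allows the excess degrees $D^*-d_{K_i}(v)$ to be as small as $0$, so you lose the paper's cheap dismissal of the regime $n-|B'|\ge 1.01|A'|$ (the paper's buffer $10^5\sqrt{(D_G/S)\log D_G}$ above $d_{K_i}$ forces the defect to be negative there), which means your case analysis must cover additional configurations you have not addressed. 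The fix is to carry out the verification in the paper's parametrization with a generous buffer in $d$, splitting on $|A'|\ge 4n/5$, $n/S\le|A'|<4n/5$, and $|A'|<n/S$, and using (E1), (E2), (R1), (R2) together with the $r$-niceness of $G$.
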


\begin{proof}
Let $K_i := H_i \setminus H_i^+$.
Suppose that for all $i \in [S-1]$, there exists a regular subgraph satisfying 
\begin{equation}
\label{eqn:regular-subgraph}
K_i \subseteq R_{i} \subseteq  H_i.
\end{equation}
Then, we set $R_{S} := G \setminus (R_1 \cup \dots \cup R_{S-1})$. Note that since $G$ is regular and $R_1,\dots, R_{S-1}$ are disjoint regular subgraphs of $G$, $R_S$ is also regular.

We claim that for any $r$-admissible realisation, we can always find regular subgraphs satisfying \cref{eqn:regular-subgraph}. Before proving this claim, let us verify that the construction thus obtained satisfies the desired properties. $(P1), (P2), (P3)$ are satisfied by construction. 
It remains to verify that $(P4)$ and $(P5)$ hold. For $(P4)$, we have for all $i \in [S]$ that
\[\delta_{H_i} - \Delta_{H_i^+} \leq d_{R_i} \leq \Delta_{H_i} + \Delta_{H^+},\]
where $d_{G'},\Delta_{G'},\delta_{G'}$ denote respectively the average, maximum and minimum degree of a subgraph $G'$ of $G$. Using $(R1), (R2)$, we get that
\begin{align*}
\underbrace{\frac{D_G}{S} - 18\sqrt{\log{D_G}\cdot \frac{D_G}{S}} - q_{r}\frac{D_G}{S}}_{\geq \frac{D_G}{S} - 4q_{r}\frac{D_G}{S}}\leq d_{R_i} &\leq \frac{D_G}{S} + 9S\sqrt{\log{D_G}\cdot \frac{D_G}{S}} + q_r D_G \\
&\le \frac{D_G}{S} + 2Sq_r \frac{D_G}{S},
\end{align*}
where we have used that $q_{r} = D_{r}^{-1/8}$ and $D_{r} \geq N_0$, a sufficiently large absolute constant. In particular, using that $D_G = \exp(\pm \delta_{r-1})D_{r}$, we get that $D_{R_i} = \exp(\pm \delta_{r})D_{r+1}$,
thereby verifying $(N1)$ in $(P5)$. For $(N2)$ in $(P5)$, note that by $(E3),(E4)$, for all $A'\subseteq A, B'\subseteq B$ with $|B'| \geq |A'| > n/S$, $n-|B'| = 1.01|A'|$, and $\beta' = \max(1/3, 1-|B'|/n)$, we have that 
\begin{align*}
|E_{R_i}(A',B\setminus B')|
&\le |E_{H_i}(A',B\setminus B')| + \sum_{i \in [S]}|E_{H_i^+}(A',B\setminus B')|\\
&\le \left(\exp(1+ q_r + \delta_{r-1}) + 4Sq_r\right)\frac{D_G}{S}|A'|\beta'\\
&\le \exp(1+q_r + \delta_{r-1})\left(1+4Sq_r\right)^{2}{D_{R_i}}|A'|\beta'\\
&\le \exp(1+\delta_r){D_{R_i}}|A'|\beta',
\end{align*}
where we have used that $q_{r} = D_{r}^{-1/8}$ and $D_{r} \geq N_0$, a sufficiently large absolute constant.  

Finally, we establish the existence of regular subgraphs satisfying \cref{eqn:regular-subgraph} by applying \cref{lem:max-flow} on the graph $H_{i}^+ = (A,B, E(H_i^+))$
with $f(a)=d-d_{K_{i}}(a)$ and $g(b)=d-d_{K_{i}}(b)$
for $d:=d_{K_{i}}+10^5\sqrt{(D_G/S)\log D_G}$, where for notational convenience, we assume that second summand is an integer. Observe that this choice of $f$ and $g$ is valid. Indeed, since $|A| = |B|$, we have that $\sum_{a \in A}f(a) = \sum_{b\in B}g(b)$ and moreover, by $(R1), (R2)$, 
\begin{align*}
d-d_{K_{i}}(v) 
\ge d_{K_i} + 10^5\sqrt{(D_G/S)\log D_G} -\Delta_{K_i} 
 > (10^5-72)\sqrt{(D_G/S)\log D_G},
\end{align*}
so that $f,g \geq 0$. For later use, note also that by a similar computation,
\[
d-d_{K_{i}}(v)\le d_{K_i}+ 10^5\sqrt{(D_G/S)\log D_G} - \delta_{K_i} < (10^5+72)\sqrt{(D_G/S)\log D_G}.
\]
It remains to show that for all $A'\subseteq A$ and $B'\subseteq B$,
\begin{equation}
\left|E_{H_{i}^+}(A',B')\right|\ge \Delta(A',B'):=\sum_{a\in A'}(d-d_{K_{i}}(a))-\sum_{b\notin B'}(d-d_{K_{i}}(b)).\label{eq:A'B'cond}
\end{equation}
Since
\[
\sum_{a\in A'}(d-d_{K_{i}}(a))-\sum_{b\notin B'}(d-d_{K_{i}}(b))=\sum_{b\in B'}(d-d_{K_{i}}(b))-\sum_{a\notin A'}(d-d_{K_{i}}(a)),
\]
it suffices by symmetry to assume that $|B'|\ge|A'|$. 

We consider a few different cases. In each case, we will use that $D_r \geq N_0$, a sufficiently large absolute constant. 

\textbf{Case 1:} $|A'| \ge 4n/5$. In this case, $|B'|\ge|A'| \ge 4n/5$, so that by $(E1)$,
\begin{align*}
\left|E_{H_{i}^+}(A',B')\right|
&\geq (9/10)\cdot \left|E_G(A',B')\right|q_r/S
\geq (9/10)\cdot (D_G|A'| - D_G(n-|B'|))q_{r}/S \geq 2D_{G}q_{r}n/(5S),
\end{align*}
and hence
\begin{align*}
\Delta(A',B') 
&\leq (10^5+72)|A'|\sqrt{(D_G/S)\log D_G}-(10^5-72)(n-|B'|)\sqrt{(D_G/S)\log D_G}\\
&\leq (10^{5} + 72)n\sqrt{(D_G/S)\log D_G} \leq 2D_{G}q_{r}n/(5S) \leq \left|E_{H_{i}^+}(A',B')\right|.
\end{align*}

\textbf{Case 2:} $n/S\le|A'|<4n/5$. We may assume that $n-|B'| < 1.01|A'|$, since otherwise,
$$\Delta(A',B') \leq \left((10^5+72)|A'| - (10^5 - 72)(n-|B'|)\right)\cdot \sqrt{(D_G/S)\log{D_G}} < 0$$
and \cref{eq:A'B'cond} trivially holds. Assuming that $n-|B'| < 1.01|A'|$, we have by $(E1)$ and the $r$-niceness of $G$ that
\begin{align*}
\left|E_{H_{i}^+}(A',B')\right|
&\geq (9/10)\cdot \left|E_G(A',B')\right|q_r/S \geq (9/10)\cdot q_{r}/S\cdot \left(D_G |A'| - |E_G(A', B\setminus B')|\right)\\
&\geq D_{G}|A'|q_{r}/(100S)  \geq (10^{5}+72)|A'|\sqrt{(D_G/S)\log{D_G}} \geq \Delta(A',B').
\end{align*}

\textbf{Case 3: }$|A'|<n/S$. As before, we may assume that $n-|B'| < 1.01|A'|$. 
By (R2) and (E2), we have
\begin{align*}
\left|E_{H_{i}^+}(A',B')\right|
&\geq \delta_{H_i^+}|A'| - |E_{H_i}^+(A',B\setminus B')| \geq \left(\frac{D_Gq_r}{S} - 9\sqrt{(D_G/S)\log D_G}- \frac{D_Gq_r}{2S}\right)|A'|\\
&\geq \frac{D_Gq_r}{4S}|A'| \geq (10^5+72)|A'|\sqrt{(D_G/S)\log D_G} \geq \Delta(A',B'). \qedhere
\end{align*}

\begin{remark}
\label{rmk:edge-lb}
    Our calculation in Case 1 and Case 2 in the above proof shows that for all $A'\subseteq A$ and $B'\subseteq B$ satisfying either $|B'|\geq |A'| \geq 4n/5$ or $4n/5 \geq |A'| \geq n/S$ and $n-|B'| < 1.01|A'|$, we have
    \[|E_G(A', B')| \geq D_G|A'|/100,\]
    and similarly with the roles of $A'$ and $B'$ interchanged. 
\end{remark}

\end{proof}

\subsection{The building block}
Having established \cref{prop:admissibile-decomposition}, our goal now is to construct a distribution on $\{\pi(e),\xi(e)\}_{e\in G}$ such that for $r$-nice graphs $G$, the random variables are $r$-admissible with sufficiently high probability and moreover, if $G$ is drawn from an $O(D_r/n)$-spread distribution, then the resulting distribution on $(r+1)$-nice graphs is $O(D_{r+1}/n)$-spread. As in the previous subsection, we consider $r \in \mb{Z}_{\geq 0}$ and a $D_G$-regular bipartite graph $G=(A,B,E)$ with $|A|=|B|=n$ and $D_G \geq N_0$, which is $r$-nice (\cref{def:nice}) with respect to the sequence $D_{r}$ in the statement of \cref{thm:factorization-spread-precise}. For $v \in A \cup B$, consider the following events:
\begin{itemize}
\item $\mc{R}_1(v)$ denotes the event that for some $i \in [S]$, $v$ does not satisfy $(R1)$ in \cref{def:admissibility}.
\item $\mc{R}_2(v)$ denotes the event that for some $i \in [S]$, $v$ does not satisfy $(R2)$ in \cref{def:admissibility}.
\end{itemize}
Let $\bm{P}_{G}$ denote the product distribution on $\{\pi(e), \xi(e)\}_{e\in G}$, where each $\pi(e)$ is distributed uniformly in $[S]$ and each $\xi(e) \sim \on{Bernoulli}(q_r)$. We define the probability distribution $\mb{P}_G$ on $\{\pi(e),\xi(e)\}_{e\in G}$ to be the conditional distribution
\[\mb{P}_{G} := \bm{P}_{G}\left[\cdot \mid (\cap_{v \in V(G)}\mc{R}_1(v)^{c}) \bigcap (\cap_{v \in V(G)}\mc{R}_2(v)^{c})\right].\]

Using the Chernoff bound (\cref{lem:Chernoff}) and \cref{lem:LLL}, we show that with very high probability, a sample from $\mb{P}_G$ satisfies all properties of admissibility except possibly $(E2)$.

\begin{lem}
\label{lem:admissibility-easy}
Let $r$ and $G$ be as above. With probability at least $1 - \exp(-n)$, a random realisation of $\{\pi(e), \xi(e)\}_{e\in G}$ drawn from the distribution $\mb{P}_G$ satisfies all properties in \cref{def:admissibility}, except possibly $(E2)$. %
\end{lem}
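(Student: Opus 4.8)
The statement asks us to show that, under the LLL-conditioned distribution $\mb{P}_G$, all the admissibility properties of \cref{def:admissibility} except possibly $(E2)$ hold with probability at least $1-\exp(-n)$. The strategy is a two-level argument: first analyze each individual ``bad event'' under the \emph{product} measure $\bm{P}_G$ using Chernoff, then transfer the bound to the conditional measure $\mb{P}_G$ using the comparison inequality of \cref{lem:LLL}. I would first record the LLL setup: the underlying independent variables are $\{\pi(e),\xi(e)\}_{e\in G}$, and the conditioning events in the definition of $\mb{P}_G$ are $\mc{R}_1(v),\mc{R}_2(v)$ for $v\in V(G)$, each depending only on the $\le D_G$ edges incident to $v$; two such events are dependent only if their vertices share an edge, so the dependency graph has maximum degree $\Delta \le 2D_G$ (plus lower-order terms). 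By Chernoff (\cref{lem:Chernoff}), for each vertex $v$ and each $i\in[S]$, $d_{H_i}(v)$ is $\on{Binomial}(D_G,1/S)$ and $d_{H_i^+}(v)$ is $\on{Binomial}(D_G,q_r/S)$; the deviation in $(R1),(R2)$ is $9\sqrt{\log D_G\cdot D_G/S}$, which is $\Theta(\sqrt{\log D_G})$ standard deviations, so each $\mc{R}_1(v),\mc{R}_2(v)$ has probability at most $p := D_G^{-c}$ for a suitable constant $c$ (after union-bounding over $i\in[S]$, absorbing $S$ since $D_G\ge N_0$ is large relative to $S$). Then $4p\Delta \le 1$ holds comfortably, so \cref{lem:LLL} applies.

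**The main work — each non-$(E2)$ property.** For each of $(R1),(R2),(E1),(E3),(E4)$, I would define the corresponding bad event $\mc{E}$ (indexed by $i$ and, for $(E1),(E3),(E4)$, by a pair of sets $A',B'$, over which we union-bound), bound $\bm{P}[\mc{E}]$ via Chernoff, then bound $N$ (the number of conditioning events $\mc{R}_1(v),\mc{R}_2(v)$ touching the variables $\mc{E}$ depends on) and conclude via $\mathbb{P}[\mc{E}]\le \bm{P}[\mc{E}]\exp(6pN)$. The point is that $\mc{E}$ for a fixed set-pair depends on edges within $E_G(A',B')$ or $E_G(A',B\setminus B')$, so $N \le 2(|A'|+|B'|) \le 4n$, and $6pN \le 24 pn$; since $p = D_G^{-c}$ is polynomially small while the Chernoff bound on $\bm{P}[\mc{E}]$ for a \emph{fixed} set-pair will be $\exp(-\Omega(\text{(number of relevant edges)}))$ — which for the set-sizes in question is $\exp(-\Omega(D_G|A'|/S)) $ by \cref{rmk:edge-lb} giving the lower bound $|E_G(A',B')|\ge D_G|A'|/100$ — the union bound over the $\binom{n}{|A'|}\binom{n}{|B'|}\le \exp(O(|A'|\log n))$ set-pairs is dominated provided $D_G/S \gg \log n$. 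This last condition is exactly where one uses $D_r\ge N_0$ together with $q_r = D_r^{-1/8}$: we only ever recurse down to degrees that are still large absolute constants relative to anything that matters at that scale, and the union bound over set-pairs of a given size is beaten by the Chernoff exponent $\Theta(D_G|A'|/S)$ with room to spare, leaving a final bound of $\exp(-\Omega(D_G n/S))\le \exp(-2n)$ for each property; a final union bound over the $O(1)$ properties and $S$ choices of $i$ (and the $A'\leftrightarrow B'$ symmetric versions) gives $1-\exp(-n)$.

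**For $(R1),(R2)$ themselves** the argument is even simpler: these are already (almost) the conditioning events. Under $\mathbb{P} = \mb{P}_G$, the events $\cap_v \mc{R}_1(v)^c$ and $\cap_v\mc{R}_2(v)^c$ hold \emph{with probability $1$} by definition of the conditioning — so $(R1),(R2)$ hold deterministically under $\mb{P}_G$, and only $(E1),(E3),(E4)$ need the Chernoff-plus-\cref{lem:LLL} transfer. I would be careful to state the $(E1),(E3),(E4)$ Chernoff bounds relative to the realized degree sequence: on the event that $(R1),(R2)$ hold (which is all of $\mb{P}_G$'s support), the quantities $|E_{H_i^+}(A',B')|$, $|E_{H_i}(A',B\setminus B')|$, $|E_{H_i^+}(A',B\setminus B')|$ have the right conditional means up to the $(1\pm q_r)$-type slack already baked into the exponents in \cref{def:admissibility} (the $\exp(1+q_r+\delta_{r-1})$ and factor-$4$ and factor-$9/10$ constants are deliberately loose), and the fluctuations are controlled by Chernoff applied to the $\pi,\xi$ variables restricted to the relevant edge set.

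**Main obstacle.** The genuinely delicate point is the interplay in $(E3),(E4)$ between the union bound over exponentially many set-pairs and the fact that for $|A'|$ close to $n/S$ the relevant edge counts $|E_G(A',B\setminus B')|$ can be as small as $\Theta(D_G|A'|\cdot \kappa)$ with $\kappa$ a small constant — so one must check that the Chernoff exponent $\Omega(D_G|A'|\kappa/S)$ still dominates the entropy $O(|A'|\log n)$, i.e. that $D_G\kappa/S \gg \log n$, which again reduces to $D_r\ge N_0$ large but requires threading the constant $\kappa$ (and the constant inside $N_0$'s dependence on $\epsilon, S$) carefully. The $(E2)$ property is excluded precisely because there the relevant edge set can be genuinely tiny (sets of size $\le n/S$ with $|E_{H_i^+}(A',B')|$ potentially as large as its upper target but with no useful lower bound on $|E_G(A',B')|$), so Chernoff-plus-union-bound fails and a different (spread-based) argument — deferred to \cref{prop:inner-iteration-key} — is needed; the role of this lemma is exactly to dispatch everything \emph{else} cheaply.
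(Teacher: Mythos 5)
Your overall architecture does match the paper's proof: $(R1),(R2)$ hold deterministically under $\mb{P}_G$ by the conditioning, each of $(E1),(E3),(E4)$ is handled by a Chernoff bound under the product measure $\bm{P}_G$, and the bound is transferred to $\mb{P}_G$ via \cref{lem:LLL} with $p\le D_r^{-20}$ and $N\le 4n$, at a multiplicative cost $\exp(O(nD_r^{-20}))$. However, there is a genuine gap in your entropy accounting for the union bound over set-pairs. You bound the number of pairs by $\exp(O(|A'|\log n))$ and conclude that the Chernoff exponent dominates ``provided $D_G/S\gg\log n$'' (and later ``$D_G\kappa/S\gg\log n$''), asserting that this ``reduces to $D_r\ge N_0$.'' It does not: the lemma must hold for $r$-nice graphs whose degree $D_G=\exp(\pm\delta_{r-1})D_r$ is merely a large \emph{constant} $N_0$ independent of $n$, and this is exactly the regime the paper needs in order to recurse below degree $\log n$ --- the whole advance over \cite{KKKMO}. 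With your accounting the argument only works when $D_r\gtrsim S\log n$, so the lemma as stated (and hence the recursion down to constant degree) would remain unproven.

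The missing observation is that every set-pair appearing in $(E1),(E3),(E4)$ has $|A'|\ge n/S$; only $(E2)$ quantifies over sets smaller than $n/S$, which is precisely why it is excluded here and handled by spreadness in \cref{prop:inner-iteration-key}. Consequently the union bound runs over at most $4^{n}\cdot S\le \exp(O(S|A'|))$ choices, i.e.\ the entropy cost is $O(\log S)$ per element of $A'$, not $O(\log n)$. Since the per-pair bound after the LLL transfer is $\exp(-\Omega(q_rD_G|A'|/S))\le\exp(-D_r^{3/4}|A'|)$ for $(E1)$, and $\exp(-\Omega(q_r^{2}D_G|A'|\beta'/S))\le\exp(-D_r^{5/8}|A'|)$ for $(E3),(E4)$ (note also that your stated exponents $\Omega(D_G|A'|/S)$ and $\Omega(D_G|A'|\kappa/S)$ drop the $q_r$, resp.\ $q_r^{2}$, factors coming from the mean of $H_i^{+}$ and from the relative deviation of size $\approx q_r$), domination requires only $D_r^{5/8}\gg S$, which is guaranteed by choosing $N_0$ large in terms of $S$ and $\epsilon$, with no condition involving $n$. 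Summing over $|A'|\ge n/S$ then yields failure probability $\exp(-\Omega(D_r^{1/2}n))\le\exp(-n)$, exactly as in the paper.
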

\begin{proof}
    Note that $(R1)$ and $(R2)$ are always satisfied by construction. 
In order to control the probability that at least one of $(E1),(E3),(E4)$ fails, we will use \cref{lem:LLL}. To this end, we begin by observing that a direct application of the Chernoff bound (\cref{lem:Chernoff}) and union bound shows that for all $v \in V(G)$ and $j \in [\gamma]$, 
\begin{align}
\label{eqn:chernoff}
    \max\left(\bm{P}_{G}[\mc{R}_1(v)], \bm{P}_{G}[\mc{R}_2(v)]\right) &\leq D_{r}^{-20};
\end{align}

For the collection of events $\{\mc{R}_1(v), \mc{R}_2(v)\}_{v\in V(G)}$ and the product measure $\bm{P}_{G}$ on $\{\pi(e), \xi(e)\}_{e\in G}$, it is easily verified that a dependency graph $\Gamma$ (in the sense of \cref{def:dependency-graph}) is given by $G$ itself, so that the condition in \cref{lem:LLL} is satisfied. By the Chernoff bound (\cref{lem:Chernoff}) and \cref{rmk:edge-lb}, for any $A'\subseteq A, B'\subseteq B$ satisfying either $|B'|\geq |A'| \geq 4n/5$ or $4n/5 \geq |A'| \geq n/S$ and $n-|B'| < 1.01|A'|$, we have for any $i \in [S]$ that
\begin{align*}
\bm{P}_{G}\left[\underbrace{\left|E_{H_i^+}(A',B')\right| < (9/10)\cdot E_G(A',B')q_r/S}_{\mathcal{E}_{1}(A',B',i)}\right] 
&\leq \exp\left(-\frac{E_G(A',B')q_r}{200S}\right) \leq \exp\left(-\frac{D_G|A'|q_r}{20000S}\right).
\end{align*}
Since there are $4n$ events of the form $\mathcal{R}_1(v)$ or $\mathcal{R}_2(v)$, it follows from \cref{lem:LLL} that
\begin{align*}
\mb{P}_{G}'[\mathcal{E}_1(A',B',i)] \leq \exp\left(-\frac{D_G|A'|q_r}{20000S}\right)\exp\left(24nD_{r}^{-20}\right) &\leq \exp\left(-\frac{D_G|A'|q_r}{40000S}\right) \leq \exp\left(-D_r^{3/4}|A'|\right),
\end{align*}
so that a union bound over the relevant choices of $A',B'$ and $i$ (together with the case where the roles of $A'$ and $B'$ are interchanged) shows that
\begin{align*}
    \mb{P}_{G}[(E1)\text{ fails}] \leq \exp(-nD_{r}^{1/2}). 
\end{align*}

Since $G$ is $r$-nice, we have by $(N2)$ that for all $A'\subseteq A,B'\subseteq B$ with $|B'|\ge|A'| > n/S$ and $n-|B'|=1.01|A'|$,
\[
|E_{G}(A',B\setminus B')|\le \exp(1+\delta_{r-1})D_G|A'|\underbrace{\max\left(\frac{1}{3},\frac{n-|B'|}{n}\right)}_{\beta'}.
\]
Therefore, by the Chernoff bound (\cref{lem:Chernoff}), 
\begin{align*}
    \bm{P}_{G}\left[|E_{H_i}(A',B\setminus B')| > \exp(1+10q_r + \delta_{r-1})D_G|A'|\beta'/S\right] &\leq \exp\left(-\frac{q_r^{2} D_G |A'|\beta'}{S}\right) 
    \leq \exp\left(-D_{r}^{5/8}|A'|\right),
\end{align*}
so once again, using \cref{lem:LLL} and the union bound over relevant choices of $A',B',i$ (together with the case where the roles of $A'$ and $B'$ are interchanged) shows that
\[\mb{P}_{G}[(E3)\text{ fails}] \leq \exp(-n D_{r}^{1/2}).\]
A similar computation shows that 
\[\mb{P}_G[(E4)\text{ fails}] \leq \exp(-nD_r^{1/2}). \qedhere\]

\end{proof}

It remains to show that the probability that $(E2)$ is violated by a sample from $\mb{P}_G$ is sufficiently small. While this is not necessarily true for all $r$-nice graphs $G$, we show in the key \cref{prop:inner-iteration-key} that if $G$ is drawn from an $O(D_r/n)$-spread distribution over $r$-nice graphs, then with very high probability over the choice of $G$, $\mb{P}_G$ does have this property.

\begin{definition}[r-excellence]
\label{def:excellent}
For $r \in \mb{Z}_{\geq 0}$, we say that a  bipartite graph $G = (A,B,E)$ with $|A|=|B|=n$ is $r$-excellent if $G$ is $r$-nice and moreover, the probability that a random realisation $\{\pi(e), \xi(e)\}_{e\in G}$ sampled from $\mb{P}_G$ is $r$-admissible is at least $1-n^{-50}$.  

For $r \in \mb{Z}_{\geq 0}$ and a regular bipartite graph $\mb{G} = (A,B,E)$ with $|A| = |B| = n$ and degree $D_0$, we say that a decomposition $\mc{P}$ of $E(\mb{G})$ is $r$-excellent if $|\mc{P}| = D_0/D_r (=S^r)$ and every $G \in \mc{P}$ is $r$-excellent.    
\end{definition}

\begin{proposition}
\label{prop:inner-iteration-key}
For any $C > 0$, there exist $N_0$ and $S$ such that the following holds. With notation as above, let $\bm{G}_{r}$ be a probability distribution supported on $r$-nice bipartite graphs with vertex sets $(A,B)$ of size $|A| = |B| = n$. If $\bm{G}_{r}$ is $C\cdot D_{r}/n$-spread, where $D_r \geq N_0$, then the probability that $G \sim \bm{G}_{r}$ is $r$-excellent is at least $1-n^{-50}$.  
\end{proposition}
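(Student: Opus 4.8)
The plan is to reduce $r$-excellence to a statement about the failure probability of property $(E2)$, and then bound that failure probability using the spreadness of $\bm{G}_r$ to replace the union bound one would naively want to take over small vertex sets. By \cref{lem:admissibility-easy}, for \emph{every} $r$-nice graph $G$, a sample from $\mb{P}_G$ satisfies all admissibility properties except possibly $(E2)$ with probability at least $1 - \exp(-n) \geq 1 - n^{-50}/2$ (for $n$ large). Hence, writing $\mathrm{bad}(G) := \mb{P}_G[(E2)\text{ fails}]$, it suffices to show that $\mathbb{P}_{G\sim\bm{G}_r}[\mathrm{bad}(G) > n^{-50}/2]$ is at most $n^{-50}$; by Markov's inequality it is enough to prove that $\mathbb{E}_{G\sim\bm{G}_r}[\mathrm{bad}(G)]$ is at most, say, $n^{-110}$. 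So the real task is to bound, on average over $G \sim \bm{G}_r$, the probability that some pair $(A',B')$ with $|A'|\le n/S$, $|B'| = 1.01|A'|$, and some $i\in[S]$, violates $|E_{H_i^+}(A',B')|\le (D_G q_r/2S)|A'|$.

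The key step: fix sizes $a := |A'|\le n/S$ and $b := 1.01 a$, fix $i\in[S]$, and fix the pair of subsets $(A',B')$. Conditioned on $G$, the edges of $G$ inside $A'\times B'$ each land in $H_i^+$ independently with probability $q_r/S$ under $\bm{P}_G$ (and, via \cref{lem:LLL} with $N = O(n D_r^{-20})$ events touching the relevant variables, the same bound holds up to a factor $\exp(O(nD_r^{-20}))$ under $\mb{P}_G$). So if $G$ happens to have at most $(D_G/4)|A'|$ edges between $A'$ and $B'$ — call this the ``sparse'' case for $(A',B')$ — then a Chernoff bound (\cref{lem:Chernoff}, upper tail with $\delta$ a constant) gives $\mb{P}_G[(E2)\text{ fails for }(A',B',i)] \le \exp(-c D_G q_r a/S) \le \exp(-c' D_r^{7/8} a)$, which after the union bound over the $\binom{n}{a}\binom{n}{b}\le \exp(O(a\log(n/a)))$ choices of $(A',B')$ and over $i\in[S]$ is summably small (using $D_r \geq N_0$ large and $a \geq 1$). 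The only dangerous pairs are the ``dense'' ones, where $|E_G(A',B')| > (D_G/4)|A'|$; for those, we do not try to control the sample from $\mb{P}_G$ at all, and instead bound $\mathrm{bad}(G)$ crudely by $1$. Thus
\[
\mathbb{E}_{G\sim\bm{G}_r}[\mathrm{bad}(G)] \le \exp(-n D_r^{1/2}) + \sum_{a\le n/S}\sum_{(A',B')}\mathbb{P}_{G\sim\bm{G}_r}\big[|E_G(A',B')| > (D_G/4)|A'|\big].
\]
It remains to bound, for each candidate pair $(A',B')$ of sizes $(a,1.01a)$ with $a\le n/S$, the $\bm{G}_r$-probability that $G$ has more than $(D_G/4)a$ edges inside $A'\times B'$, and to check that this survives the union bound over the roughly $\exp(O(a\log(n/a)))$ such pairs.

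This last point is exactly where spreadness enters, and it is the main obstacle. The set of edges of $K_{n,n}$ inside $A'\times B'$ has size $a\cdot(1.01a) \le 1.01 n^2/S^2$; if $G$ contains a specified set $F$ of $m := \lceil (D_0/4)a / \exp(\delta_{r-1})\rceil \geq (D_G/4)a$ edges (here I use $(N1)$ to pass between $D_G$ and $D_r$, absorbing the $\exp(\pm\delta_{r-1})$ factor, which is $1 + o(1)$), then by $C D_r/n$-spreadness, $\mathbb{P}_{G\sim\bm{G}_r}[F\subseteq E(G)] \le 2(CD_r/n)^{m}$. Summing over the $\binom{1.01 a^2}{m}$ choices of $F\subseteq A'\times B'$ gives, for each $(A',B')$,
\[
\mathbb{P}_{G\sim\bm{G}_r}\big[|E_G(A',B')| > (D_G/4)a\big] \le \binom{1.01 a^2}{m}\cdot 2\Big(\frac{CD_r}{n}\Big)^{m} \le 2\Big(\frac{e\cdot 1.01 a^2}{m}\cdot \frac{CD_r}{n}\Big)^{m}.
\]
Since $m \ge (D_r/4)a\cdot\exp(-\delta_{r-1}) \ge (D_r/5)a$ and $a \le n/S$, the base is at most $e\cdot 1.01\cdot 5 C\cdot (a/n)\cdot S^{-1}\cdot (D_r/D_r)$… more carefully, $\frac{1.01 a^2}{m}\cdot\frac{CD_r}{n} \le \frac{1.01 a^2}{(D_r/5)a}\cdot\frac{CD_r}{n} = \frac{5.05\, C\, a}{n} \le \frac{5.05\,C}{S}$, so the per-pair probability is at most $2(5.05\,eC/S)^{m} \le 2(20 eC/S)^{D_r a/5}$. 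Choosing $S$ large enough (depending on $C$) that $20 eC/S < 1/2$, this is at most $2\cdot 2^{-D_r a/5}$. Finally the union bound over $(A',B')$ costs a factor $\binom{n}{a}\binom{n}{1.01a} \le \exp(3 a\log(en/a)) \le \exp(3 a\log(eS))$ (using $a\le n/S$ so $n/a \ge S$, but also $\log(en/a)\le \log(en)$ — to be safe, bound it by $\exp(3a\log n)$), and we need $D_r/5 \ge 4\log n$... which is \emph{not} guaranteed for small $D_r$. So the crude exponent $2^{-D_r a/5}$ from spreadness alone is not quite enough against the $\exp(3a\log n)$ entropy term; the fix is that the per-pair bound is really $2(20eC/S)^{m}$ with an \emph{extra} gain available because we may take $S$ as large as we like relative to any fixed power of the relevant quantities — concretely, choosing $S$ large enough that $20eC/S < n^{-4}$ is not allowed ($S$ is an absolute constant), so instead one notes $\log(en/a) \le \log(en) \le 2\log n$ and uses that the spread bound actually gives base $5.05 C a/n$ which is small when $a/n$ is small; splitting into the regime $a \le n/(\text{large})$ handled by the $(a/n)^m$ decay and the regime $n/S \ge a \ge$ (constant)$\cdot n$ which does not occur since $a \le n/S$... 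In fact $a\le n/S$ forces $a/n \le 1/S$, giving base $\le 5.05 C/S < 1/2$, and then the union-bound entropy $\exp(O(a\log n))$ is beaten once $D_r a \gtrsim a\log n$, i.e. once $D_r \gtrsim \log n$ — so for the genuinely sub-logarithmic regime one must instead use the $(a/n)^m$ factor directly: base $= 5.05 C a/n$, so per-pair probability $\le (5.05 C a/n)^{D_r a/5}$ and union-bounding over pairs, $\binom{n}{a}^2 (5.05Ca/n)^{D_r a/5} \le (en/a)^{2a}(5.05Ca/n)^{D_ra/5}$; writing $t = a/n \le 1/S$, this is $(e/t)^{2a}(5.05Ct)^{D_ra/5} = \big((e/t)^{10}(5.05Ct)^{D_r}\big)^{a/5}$ and for $D_r \ge 12$ the function $t\mapsto (e/t)^{10}(5.05Ct)^{D_r}$ is increasing on $(0,1/S]$ and at $t=1/S$ equals $(eS)^{10}(5.05C/S)^{D_r}$, which is $< n^{-200}$ once $S$ is a large enough constant depending on $C$. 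Summing the geometric series over $a \ge 1$ and over $i \in [S]$ then yields $\mathbb{E}_{G\sim\bm{G}_r}[\mathrm{bad}(G)] \le \exp(-nD_r^{1/2}) + S\cdot n^{-200} \le n^{-110}$, completing the argument via Markov as above.

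So the structure is: (1) reduce to bounding $\mathbb{E}[\mathrm{bad}(G)]$ via \cref{lem:admissibility-easy} and Markov; (2) split each candidate pair $(A',B',i)$ into ``$G$-sparse'' (handled by Chernoff on $\mb{P}_G$, using \cref{lem:LLL} to pass from $\bm{P}_G$ to $\mb{P}_G$) and ``$G$-dense'' (bounded trivially by the probability, over $G\sim\bm{G}_r$, that $G$ is dense on that pair); (3) bound the latter using $CD_r/n$-spreadness of $\bm{G}_r$ by summing over edge-subsets of $A'\times B'$ of the required size; (4) verify the union bound over all $\exp(O(a\log(n/a)))$ pairs survives, which is where the constant $S$ must be taken sufficiently large in terms of $C$ (the bound $(N1)$ is used to replace $D_G$ by $D_r$ throughout, costing only a harmless $\exp(\pm\delta_{r-1}) = 1+o(1)$ factor). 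The main obstacle is step (3)–(4): making sure the spreadness savings $(CD_r/n)^{\Theta(D_r a)}$ — which is tiny per fixed edge-set but must be summed over both exponentially many edge-subsets \emph{and} exponentially many pairs $(A',B')$ — genuinely beats the total entropy uniformly down to $D_r = N_0$, and this forces the two-regime split above and the choice of $S$ large relative to $C$.
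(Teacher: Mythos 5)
Your proposal takes a genuinely different route from the paper's. The paper does not split into sparse and dense cases: it directly bounds, for each choice of $(A',B')$ and $i$, the probability that some edge set $T\subseteq A'\times B'$ of size $\sim D_rq_ra/(4S)$ lies entirely inside $H_i^+$, and for this one event it multiplies the spreadness factor $(CD_r/n)^{|E(T)|}$ (for $T\subseteq G$) by the subsampling factor $(q_r/S)^{|E(T)|}$ (for $T\subseteq H_i^+$ given $T\subseteq G$, corrected via \cref{lem:LLL}). The per-edge gain is therefore $\sim CD_rq_r/(nS)\ll 1/n$, which beats the union bound over $T$ and $(A',B')$ uniformly down to constant $D_r$. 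Your split loses exactly this combined gain.

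The concrete gap is in your sparse case. After conditioning on $G$ being sparse for $(A',B')$ you apply Chernoff to the $H_i^+$-count alone, obtaining $\exp(-\Theta(D_Gq_ra/S)) = \exp(-\Theta(D_r^{7/8}a/S))$ per pair, and then take a union bound over the $\approx (n/a)^{2.01a}$ choices of $(A',B')$. When $D_r = N_0$ is a fixed constant and $n\to\infty$, this fails: $(E2)$ is only nontrivial once $1.01a^2 \gtrsim D_Gq_ra/(2S)$, i.e.\ $a\gtrsim D_r^{7/8}/S$, and for $a$ in this range the Chernoff exponent is $\Theta(D_r^{7/8}/S)\cdot a$ — a bounded multiple of $a$ — while the union-bound entropy is $\Theta(a\log(n/a)) = \Theta(a\log n)$. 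The sparse contribution to $\mathbb{E}[\mathrm{bad}(G)]$ is therefore at least of order $\exp\big(a(\Theta(\log n) - \Theta(D_r^{7/8}/S))\big)$, which does not go to zero as $n\to\infty$ with $D_r$ fixed. Your asserted bound $\exp(-nD_r^{1/2})$ for the sparse part has no justification. The root cause is that once you condition on sparsity you stop using the spreadness of $\bm{G}_r$ — but for sublogarithmic $D_r$, the spreadness factor $(CD_r/n)^{\#\text{edges}}$ is needed for \emph{every} pair, not only the dense ones, because Chernoff alone cannot produce any $n$-dependent decay.

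A secondary error: in the dense case you maximize $t\mapsto (e/t)^{10}(5.05Ct)^{D_r}$ over $t\in(0,1/S]$ and claim the maximum $(eS)^{10}(5.05C/S)^{D_r}$ is $<n^{-200}$ for $S$ large. That quantity is independent of $n$ when $D_r$ is constant, so the claim cannot hold for all large $n$; the $n$-dependent decay comes from the factor $(a/n)^{D_ra/5}$ evaluated at $t=a/n$ (not at $t=1/S$), and one must keep that dependence. (This part is repairable: for $a=1$, the bound $(en)^{\Theta(1)}(O(C)/n)^{\Theta(D_r)}$ is $\le n^{-200}$ once $N_0$ is large enough; the sparse case above is the real obstruction.)
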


\begin{proof}
By \cref{lem:admissibility-easy}, it suffices to show that with probability at least $1-n^{-200}$, the collection $\{\pi(e), \xi(e)\}_{e\in G}$ sampled from the measure $\bm{G}_{r} \times \mb{P}_G$ (i.e.~first sample $G$ from $\bm{G}_r$ and then sample $\{\pi(e), \xi(e)\}_{e\in G}$ from $\mb{P}_G$) satisfies $(E2)$. If $(E2)$ fails, then there exists $i \in [S]$ and $A'\subseteq A, B'\subseteq B$ with $|A'| \leq n/S$ and $|B'| = 1.01|A'|$ (or $A',B'$ with their roles switched) such that
\begin{align*}
\left|E_{H_i^+}(A',B')\right| > \frac{D_Gq_r}{2S}|A'| > \frac{D_rq_r}{4S}|A'|.
\end{align*}
Let $\mc{T}$ denote the collection of all subgraphs $T = (A',B',E(T))$ of $K_{n,n}$ with $|A'| \leq n/S$, $|B'| = 1.01|A'|$, and $|E(T)| = D_rq_r|A'|/4S$. For fixed $i \in [S]$ and any $T \in \mc{T}$, note that given an $r$-nice $G \supseteq T$, the event $T\subseteq H_i^{+}$ shares variables with at most $4|E(T)|$ events of the form $\mc{R}_1(v), \mc{R}_2(v)$. Therefore, 
using the spreadness of $\bm{G}_{r}$, \cref{eqn:chernoff}, and \cref{lem:LLL}, we have that
\begin{align*}
    \left(\bm{G}_r \times \mb{P}_G\right) [T \subseteq H_i^+]
    &= \bm{G}_r[T \subseteq G]\cdot \mb{P}_G[T \subseteq H_i^+ \mid T \subseteq G]\\
    &\leq \left(\frac{CD_{r}}{n}\right)^{|E(T)|}\cdot \bm{P}_{G}[T\subseteq H_i^+ \mid T \subseteq G]\cdot e^{4|E(T)|D_{r}^{-20}} \\
    &\leq \left(\frac{e^{4D_{r}^{-20}}CD_r}{n}\right)^{|E(T)|}\cdot \left(\frac{q_r}{S}\right)^{|E(T)|} \leq \left(\frac{2Cq_rD_r}{nS}\right)^{|E(T)|}.
\end{align*}
Therefore, by the union bound, and assuming that $D_r\ge N_0\ge 2^{20}S^2$ and $S$ is sufficiently large compared to $C$, it follows that
\begin{align*}
     \left(\bm{G}_r \times \mb{P}_G\right) \left[\bigcup_{T \in \mc{T}}\{T \subseteq H_i^+\}\right]
     &\leq \sum_{T\in \mc{T}}\left(\frac{2Cq_rD_r}{nS}\right)^{|E(T)|} 
\leq \sum_{k=1}^{n/S}\binom{n}{k}\binom{n}{1.01k}\binom{1.01k^{2}}{D_{r}q_rk/4S}\cdot \left(\frac{2Cq_rD_{r}}{nS}\right)^{D_{r}q_rk/4S}\\
     &\leq \sum_{k=1}^{n/S}\left(\frac{4n}{k}\right)^{2k}\cdot \left(\frac{100Ck}{n}\right)^{D_{r}q_rk/4S} \leq \sum_{k=1}^{n/S}\left(\frac{100Ck}{n}\right)^{D_{r}q_rk/8S} \leq n^{-400}. \qedhere
\end{align*}
\end{proof}
\subsection{Putting everything together} 
Finally, we combine everything to show how to construct an $O(D_{r+1}/n)$-spread distribution on $(r+1)$-nice decompositions, starting with an $O(D_{r}/n)$-spread distribution on $r$-nice decompositions. As discussed earlier, this completes the proof of \cref{thm:factorization-spread-precise}. Inductively, the following proposition establishes a probability distribution $\bm{\mc{P}}_{r}$ on decompositions $\mc{P}_r$ of $E(\mb{G})$ which is $\epsilon^{-1}\exp(\sum_{r'< r}5Sq_{r'})D_r/n$-spread. Note that, for $N_0$ sufficiently large, $\sum_{r' < r}q_r < 1/(100S)$ whenever $D_r \ge N_0 \ge 2^{20}S^{10}$ and thus $\bm{\mc{P}}_{r}$ is $2\epsilon^{-1}D_r/n$-spread for any such $r$. 
\begin{proposition}
\label{prop:spread-partitions}
With notation as in the statement of \cref{thm:factorization-spread-precise}, let $\bm{\mc{P}}_r$ denote a probability distribution supported on $r$-nice decompositions of $E(\mb{G})$, and let $\bm{\mc{P}}_{r+1}$ be the distribution on $(r+1)$-nice decompositions of $E(\mb{G})$ defined as follows: first sample $\mc{P}$ from the conditional distribution $\bm{\mc{P}}_{r} \mid r\on{-excellent}$, then sample 
$\{\pi(e),\xi(e)\}_{e\in G}$ from the conditional distribution $\mb{P}_G \mid r\on{-admissible}$ for each $G \in \mc{P}$, and finally use the procedure in \cref{prop:admissibile-decomposition}. For any $C \leq 2\epsilon^{-1}$, if $\bm{\mc{P}}_r$ is $C\cdot D_{r}/n$-spread, then $\bm{\mc{P}}_{r+1}$ is  $C\cdot e^{5Sq_r}\cdot D_{r+1}/n$-spread. 
\end{proposition}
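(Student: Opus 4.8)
The plan is to track how the spread parameter changes under the three-step construction defining $\bm{\mc{P}}_{r+1}$, and to bound each multiplicative loss. Fix a subgraph $F \subseteq E(\mb{G})$ together with an assignment of each edge of $F$ to one of the $S^{r+1}$ parts; call this partial decomposition $\sigma$. We must bound $\bm{\mc{P}}_{r+1}[\sigma \subseteq \mc{P}_{r+1}]$ by $2(Ce^{5Sq_r}D_{r+1}/n)^{|F|}$. Write $\sigma$ as follows: each part of an $(r+1)$-nice decomposition $\mc{P}_{r+1}$ is obtained as some $R_i$ inside some $r$-nice graph $G \in \mc{P}$, so $\sigma$ induces (i) a coarsening $\bar\sigma$ assigning each edge of $F$ to one of the $S^r$ parts $G$ of $\mc{P}$ (by remembering only which $G$ the edge landed in), and (ii) within each such $G$, a further assignment of $F \cap E(G)$ to the index $i \in [S]$. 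By \cref{prop:admissibile-decomposition}, the event $\sigma \subseteq \mc{P}_{r+1}$ implies $\bar\sigma \subseteq \mc{P}$, and moreover, by $(P2)$--$(P3)$, for $i \in [S-1]$ an edge of $G$ lands in $R_i$ only if $\pi(e) = i$ (i.e.\ $e \in H_i$), while for $i = S$ it lands in $R_S$ only if $\pi(e) = S$ or $\xi(e) = 1$. So the event $\sigma \subseteq \mc{P}_{r+1}$ is contained in: $\bar\sigma \subseteq \mc{P}$, and for each $G$ and each edge $e \in F \cap E(G)$ assigned to index $i < S$ we have $\pi_G(e) = i$, and for edges assigned to index $S$ we have $\pi_G(e) = S$ or $\xi_G(e) = 1$.

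Now I condition step by step. First, by hypothesis $\bm{\mc{P}}_r[\bar\sigma \subseteq \mc{P}] \le 2(CD_r/n)^{|F|}$. Passing to the conditional distribution $\bm{\mc{P}}_r \mid r\text{-excellent}$ costs a factor $1/\bm{\mc{P}}_r[r\text{-excellent}]$; by \cref{prop:inner-iteration-key} (applicable since $C \le 2\epsilon^{-1}$ and $\bm{\mc{P}}_r$ has the stated spread, and every $G$ in its support is $r$-nice), this probability is at least $1 - S^r \cdot n^{-50} \ge 1/2$, costing only a factor $2$, which will be absorbed into the allowed factor-of-$2$ slack in the definition of spread. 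Second, conditioned on a fixed $r$-excellent $\mc{P}$ with $\bar\sigma \subseteq \mc{P}$, I sample $\{\pi_G(e),\xi_G(e)\}$ from $\mb{P}_G \mid r\text{-admissible}$ independently across $G \in \mc{P}$. For a fixed $G$, let $F_G = F \cap E(G)$, split into $F_G^{<S}$ (edges assigned to some $i < S$) and $F_G^{S}$ (edges assigned to $S$). Under the product measure $\bm{P}_G$, the probability that all edges of $F_G^{<S}$ get their prescribed $\pi$-value and all edges of $F_G^S$ satisfy $\pi = S$ or $\xi = 1$ is exactly $(1/S)^{|F_G^{<S}|} \cdot (1/S + q_r - q_r/S)^{|F_G^S|} \le (1/S)^{|F_G^{<S}|}\cdot (1+Sq_r)^{|F_G^S|}(1/S)^{|F_G^S|} = (1+Sq_r)^{|F_G^S|} S^{-|F_G|}$.

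Third, I must transfer this bound from $\bm{P}_G$ to the conditional measure $\mb{P}_G$ and then to $\mb{P}_G \mid r\text{-admissible}$. Since $G$ is $r$-excellent, $\mb{P}_G[r\text{-admissible}] \ge 1 - n^{-50} \ge 1/2$, so conditioning on $r$-admissibility costs at most a factor $2$. For the passage from $\bm{P}_G$ to $\mb{P}_G = \bm{P}_G[\cdot \mid \bigcap_v \mc{R}_1(v)^c \cap \bigcap_v \mc{R}_2(v)^c]$, I apply \cref{lem:LLL}: with $p = D_r^{-20}$ (by \cref{eqn:chernoff}), dependency graph $\Gamma = G$, and $\mc{E}$ the event above (which depends on the variables on $F_G$, hence shares a variable with at most $4|F_G|$ of the events $\mc{R}_1(v),\mc{R}_2(v)$), we get $\mb{P}_G[\mc{E}] \le \bm{P}_G[\mc{E}]\cdot \exp(6 D_r^{-20}\cdot 4|F_G|) = \bm{P}_G[\mc{E}]\cdot \exp(24 D_r^{-20}|F_G|)$. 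Multiplying the per-$G$ bounds over all $G \in \mc{P}$ (with $\sum_G |F_G| = |F|$, $\sum_G |F_G^S| \le |F|$), and collecting the factor $2$ from each conditioning, the total bound on $\bm{\mc{P}}_{r+1}[\sigma \subseteq \mc{P}_{r+1}]$ is at most
\[
2 \cdot \left(\frac{CD_r}{n}\right)^{|F|}\cdot S^{-|F|}\cdot (1+Sq_r)^{|F|}\cdot \exp(24 D_r^{-20}|F|) \cdot (\text{bounded conditioning factors}),
\]
and since $D_{r+1} = D_r/S$, we need $(1+Sq_r)\exp(24 D_r^{-20}) \cdot (\text{conditioning factors per edge}) \le e^{5Sq_r}$. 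Because $q_r = D_r^{-1/8}$ and $D_r \ge N_0$ is large, $1+Sq_r \le e^{Sq_r}$ and $\exp(24D_r^{-20}) \le e^{Sq_r}$, leaving ample room; the remaining conditioning factors ($1/\bm{\mc{P}}_r[r\text{-excellent}]$ and $\prod_G 1/\mb{P}_G[r\text{-admissible}]$, each at most $2$) are \emph{global} constants, not per-edge, so they are absorbed by the leading factor-of-$2$ slack in the spread definition — but I should double check this absorption, since the product over $G \in \mc{P}$ of the conditioning factors is $2^{S^r}$, which is \emph{not} $O(1)$.

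This last point is the main obstacle, and the fix is to be more careful: rather than conditioning $\mb{P}_G$ on $r$-admissibility separately for each $G$ and taking a crude union/product bound, one should note that $\mb{P}_G[\mc{E}] \le \mb{P}_G[r\text{-admissible}]^{-1}\cdot \mb{P}_G[\mc{E}\cap r\text{-admissible}] \le (1 - n^{-50})^{-1}\mb{P}_G[\mc{E}]$ is only needed when $\mc{E}$ actually has edges in $G$, i.e.\ when $F_G \ne \emptyset$; for the (possibly many) parts $G$ with $F_G = \emptyset$ the event is trivial and contributes no factor. Since $F$ is finite, at most $|F|$ parts $G$ have $F_G \ne \emptyset$, so the product of conditioning factors is at most $(1-n^{-50})^{-|F|} \le e^{2n^{-50}|F|} \le e^{Sq_r|F|}$ for $n$ large — again a per-edge factor that fits inside $e^{5Sq_r}$. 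With this accounting, all losses are per-edge and bounded by $e^{5Sq_r}$ in total (using $5Sq_r = Sq_r + Sq_r + Sq_r + \dots$ with room to spare), and the single remaining global factor is $1/\bm{\mc{P}}_r[r\text{-excellent}] \le 2$, matched by the factor-$2$ in the definition of spread. This proves $\bm{\mc{P}}_{r+1}$ is $Ce^{5Sq_r}D_{r+1}/n$-spread.
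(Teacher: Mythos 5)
Your proposal is correct and follows essentially the same route as the paper: reduce the event $\sigma\subseteq\mc{P}_{r+1}$ via $(P2)$--$(P3)$ to $\pi/\xi$-events, use the spread of $\bm{\mc{P}}_r$ for the coarsened assignment, absorb the $r$-excellence and $r$-admissibility conditionings, and transfer from $\bm{P}_G$ to $\mb{P}_G$ with the LLL comparison of \cref{lem:LLL}, including the key observation (which the paper makes via $|I|\le\sum_i|T_i|$) that only the at most $|F|$ parts meeting $F$ contribute conditioning losses. The only cosmetic looseness is describing the excellence-conditioning cost as a global factor $2$ ``matched by the factor-of-$2$ slack''; as in the paper, the honest bound $(1-n^{-49})^{-1}\le e^{q_r}$ is tiny and should simply be absorbed per edge, which your argument otherwise already accommodates.
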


\begin{proof}
Let $\bm{\mc{P}}_r=(\bm{G}_1,\dots,\bm{G}_{S^r})$. 
$
D_{\bm{G}_i}=\exp(\pm \delta_{r-1})\cdot D_r. 
$
Since $\bm{\mc{P}}_r$ is $C\cdot D_r/n$-spread, the same holds by marginalization for each of the distributions $\bm{G}_1,\dots, \bm{G}_{S^r}$.  %
Therefore, by \cref{prop:inner-iteration-key} and the union bound, the probability that $\mc{P}_r$ drawn from $\bm{\mc{P}}_r$ is $r$-excellent is at least $1-n^{-49}$; in particular, the conditional distribution $\wt{\bm{\mc{P}}}_r := \bm{\mc{P}}_{r} \mid r\text{-excellent}$ is spread with parameter at most $(1-n^{-49})^{-1}\cdot C\cdot D_r/n \leq e^{q_r}CD_r/n$.

Recall that $\bm{\mc{P}}_{r+1} = (\bm{G}_{i,j})_{i \in [S^r], j \in [S]}$ is obtained by first sampling $(\bm{G}_{1},\dots,\bm{G}_{S^r})$ from $\wt{\bm{\mc{P}}}_r$ and then decomposing $\bm{G}_i$ into $\bm{G}_{i,1}\cup \dots \cup \bm{G}_{i,S}$, independently for each $i \in [S^r]$, by sampling $\{\pi(e),\xi(e)\}_{e\in \bm{G}_i}$ from $\wt{\mb{P}}_{\bm{G}_i} := \mb{P}_{\bm{G}_i} \mid r\on{-admissible}$ and using the procedure in \cref{prop:admissibile-decomposition}. Let $\{T_{i,j}\}_{i \in [S^r], j \in [S]}$ be disjoint subsets of $E(\mb{G})$. For $i \in [S^r]$, let $T_i = \bigcup_{j\in [S]}T_{i,j}$. Let $I \subseteq [S^r]$ denote the subset of indices $i$ for which $T_i \neq \emptyset$. Then,
\begin{align}
\label{eqn:combine}
    \Pr\left[\bigcap_{i \in [S^r], j \in [S]}\{T_{i,j}\subseteq \bm{G}_{i,j}\} \right]
    &=  \Pr\left[\bigcap_{i \in I, j \in [S]}\{T_{i,j}\subseteq \bm{G}_{i,j}\} \mid \bigcap_{i \in I}\{T_i \subseteq \bm{G}_i\}\right]\cdot \Pr\left[\bigcap_{i \in I}\{T_i \subseteq \bm{G}_i\}\right].
\end{align}

For the second term in the product, using the spreadness of $\wt{\bm{\mc{P}}}_r$, we have
\begin{equation}
\label{eq:bound1}
 \Pr\left[\bigcap_{i \in I}\{T_i \subseteq \bm{G}_i\}\right] \leq \left(\frac{e^{q_r}CD_r}{n}\right)^{\sum_i |T_i|}.
\end{equation}
For the first term in the product, we have the upper bound
\begin{align}
\label{eq:bound2}
    \prod_{i \in I}\wt{\mb{P}}_{\bm{G}_i}[\cap_{j \in [S]}T_{i,j} \subseteq \bm{G}_{i,j} \mid T_i \subseteq \bm{G}_i]
    &\leq (1-n^{-50})^{-|I|}\cdot \prod_{i\in I}{\mb{P}}_{\bm{G}_i}[\cap_{j \in [S]}T_{i,j} \subseteq \bm{G}_{i,j} \mid T_i \subseteq \bm{G}_i],
\end{align}
where we have used that each $\bm{G}_i$ is supported on $r$-excellent graphs. Recalling that for $j \in [S]$, $\bm{G}_{i,j}$ is always contained in  $H_j \cup H^+$, we can upper bound the $i^{th}$ term in the product by
\begin{align}
\label{eq:bound3}
    \mb{P}_{\bm{G}_i}[\cap_{j \in [S]}\cap_{e \in T_{i,j}} \{(\pi(e) = j) \cup (\xi(e) = 1)\} \mid T_i \subseteq \bm{G}_i] \nonumber\\
    \leq e^{20|T_i|D_r^{-20}}\left(\frac{1}{S} + q_r\right)^{|T_i|} 
    \leq \left(\frac{e^{3Sq_r}}{S}\right)^{|T_i|},
\end{align}
where in the first inequality, we use \cref{lem:LLL} and \cref{eqn:chernoff}. Combining \cref{eqn:combine,eq:bound1,eq:bound2,eq:bound3}, we have that
\begin{align*}
    \Pr\left[\bigcap_{i \in [S^r], j \in [S]}\{T_{i,j}\subseteq \bm{G}_{i,j}\} \right]
    &\leq \left(\frac{e^{q_r}CD_r}{n}\right)^{\sum_{i}|T_i|}\cdot \left(\frac{e^{3Sq_r}}{S}\right)^{\sum_i|T_i|}\cdot (1-n^{-50})^{-|I|}\\
    &\leq \left(\frac{e^{5Sq_r}CD_r}{nS}\right)^{\sum_{i}|T_i|} \leq  \left(\frac{e^{5Sq_r}CD_{r+1}}{n}\right)^{\sum_{i,j}|T_{i,j}|},
\end{align*}
as desired. \qedhere

\section*{Acknowledgements.}
The second author would like to thank David Conlon and Jacob Fox for helpful discussions. The second author is supported by a Two Sigma Fellowship.

\end{proof}

\

\end{document}